\newtheorem{theorem}{Theorem}[section]
\newtheorem{definition}[theorem]{Definition}
\newtheorem{corollary}[theorem]{Corollary}
\newtheorem{lemma}[theorem]{Lemma}
\newtheorem{remark}[theorem]{Remark}
\newtheorem{proposition}[theorem]{Proposition}
\numberwithin{equation}{section}
\def\C{{\mathbb C}}
\def\N{{\mathbb N}}
\def\R{{\mathbb R}}
\def\E{{\mathbb E}}
\def\eps{\varepsilon}
\newcommand{\abs}[1]{\left\lvert#1\right\rvert}
\begin{document}

\title{Expected number of real zeros for random orthogonal polynomials}

\author{Doron S. Lubinsky, Igor E. Pritsker and Xiaoju Xie}


\date{}

\maketitle

\begin{abstract}
We study the expected number of real zeros for random linear combinations of orthogonal polynomials. It is well known that Kac polynomials, spanned by monomials with i.i.d. Gaussian coefficients, have only  $(2/\pi + o(1))\log{n}$ expected real zeros in terms of the degree $n$. If the basis is given by the orthonormal polynomials associated with a compactly supported Borel measure on the real line, or associated with a Freud weight defined on the whole real line, then random linear combinations have $n/\sqrt{3} + o(n)$ expected real zeros. We prove that the same asymptotic relation holds for all random orthogonal polynomials on the real line associated with a large class of weights, and give local results on the expected number of real zeros. We also show that the counting measures of properly scaled zeros of these random polynomials converge weakly to  either the Ullman distribution or the arcsine distribution.
\end{abstract}


\section{Introduction}

The expected number of real zeros $\E[N_n(\R)]$ for random polynomials of the form $P_n(x)=\sum_{k=0}^{n} c_k x^k,$ where $\{c_k\}_{k=0}^n$ are independent and identically distributed random variables, was studied since the 1930's. In particular, Bloch and P\'olya \cite{BP} gave an upper bound $\E[N_n(\R)] = O(\sqrt{n})$ for polynomials with coefficients selected from the set $\{-1,0,1\}$ with equal probabilities. Littlewood and Offord \cite{LO1}-\cite{LO2} considered several classes of random coefficients, including standard Gaussian and uniformly distributed in $\{-1,1\}$ or in the interval $(-1,1).$  Their results indicated that the expected number of real zeros is actually of logarithmic order in terms of $n$. Shortly thereafter, Kac \cite{Ka1} established the important asymptotic result
\[
\E[N_n(\R)] = (2/\pi + o(1))\log{n}\quad\mbox{as }n\to\infty,
\]
for polynomials with independent real Gaussian coefficients. In fact, Kac \cite{Ka1}-\cite{Ka2} found the exact formula for $\E[N_n(\R)]$ in the case of standard real Gaussian coefficients:
\[
\E[N_n(\R)]=\frac{4}{\pi}\displaystyle \int_0^1 \frac{\sqrt{A(x)C(x)-B^2(x)}}{A(x)} \, dx,
\]
where
\[
A(x)=\sum_{j=0}^n x^{2j},\quad
B(x)=\sum_{j=1}^n j x^{2j-1}\quad\mbox{and}\quad
C(x)=\sum_{j=1}^n j^2 x^{2j-2}.
\]
More precise forms of Kac's asymptotic for $\E[N_n(\R)]$ were obtained by many authors, including Kac \cite{Ka2}, Wang \cite{Wa}, Edelman and Kostlan \cite{EK}.  Wilkins \cite{Wi1} gave an asymptotic series expansion for $\E[N_n(\R)]$ in the case of i.i.d. Gaussian coefficients.

The asymptotic result for the number of real zeros was generalized by Erd\H{o}s and Offord \cite{EO} to coefficients with Bernoulli distribution (uniform on $\{-1,1\}$), and by Kac \cite{Ka3} to uniformly distributed coefficients on $[-1, 1]$. Later, Stevens \cite{St} showed that Kac's asymptotic holds for polynomials with coefficients from a certain general class of distributions. Finally, Ibragimov and Maslova \cite{IM1,IM2} extended the result to all mean-zero distributions in the domain of attraction of the normal law. Many additional references and further directions of work on the expected number of real zeros may be found in the books of Bharucha-Reid and Sambandham \cite{BRS}, and of Farahmand \cite{Fa}.

We state a result on the number of real zeros for the random linear combinations of rather general functions. It originated in the papers of Kac \cite{Ka1}-\cite{Ka3}, who used the monomial basis, and was extended to trigonometric polynomials and other bases, see Das \cite{Da1}-\cite{Da2} and Farahmand \cite{Fa}. We are particularly interested in the bases of orthonormal polynomials, which is the case considered by Das \cite{Da1}. Further generalizations of Kac's integral formula for the expected number of real zeros were obtained by several authors, see e.g. Cram\'{e}r and Leadbetter \cite[p. 285]{CL}. For any set $E\subset\C$, we use the notation $N_n(E)$ for the number of zeros of random functions \eqref{1.1} (or random orthogonal polynomials of degree at most $n$) located in $E$. The expected number of zeros in $E$ is denoted by $\E[N_n(E)],$ with $\E[N_n(a,b)]$ being the expected number of zeros in $(a,b)\subset\R.$

\begin{proposition} \label{prop1.1}
Let $[a,b]\subset\R$, and consider real valued functions $g_{j}(x)\in C^1([a,b]), \ j=0,\ldots,n,$ with $g_{0}(x)$ being a nonzero constant. Define the random function
\begin{equation} \label{1.1}
G_n(x)=\sum_{j=0}^{n} c_{j}g_{j}(x),
\end{equation}
where the coefficients $c_j$ are i.i.d. random variables with Gaussian distribution $\mathcal{N}(0, \sigma^2), \sigma > 0$. If there is $M\in\N$ such that $G_n'(x)$ has at most $M$ zeros in $(a,b)$ for all choices of coefficients, then the expected number of real zeros of $G_n(x)$ in the interval $(a,b)$ is given by
\begin{equation} \label{1.2}
\E[N_n(a,b)]=\frac{1}{\pi} \int_a^b \frac{\sqrt{A(x)C(x)-B^2(x)}}{A(x)} \, dx,
\end{equation}
where
\begin{align} \label{1.3}
A(x)=\sum_{j=0}^n g_j^{2}(x), \quad B(x)=\sum_{j=1}^n g_j(x)g_j'(x) \quad \mbox{and}\quad C(x)=\sum_{j=1}^n [g_j'(x)]^2.
\end{align}
\end{proposition}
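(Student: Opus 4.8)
The plan is to treat $G_n$ as a Gaussian process in $x$ and apply the Kac--Rice method. First I would record the joint law of the value and derivative at a fixed point. Since $c_0,\dots,c_n$ are i.i.d.\ $\mathcal N(0,\sigma^2)$ and, by \eqref{1.1}, $G_n'(x)=\sum_j c_j g_j'(x)$, the pair $(G_n(x),G_n'(x))$ is a centered Gaussian vector. Because $g_0$ is a nonzero constant we have $g_0'\equiv 0$, so a direct computation from \eqref{1.3} gives covariance matrix
\[
\Sigma(x)=\sigma^2\begin{pmatrix} A(x) & B(x)\\ B(x) & C(x)\end{pmatrix}.
\]
Here $A(x)\ge g_0^2>0$, so the marginal law of $G_n(x)$ is always nondegenerate, with density $p_x(0)=(2\pi\sigma^2 A(x))^{-1/2}$ at the origin; and the Cauchy--Schwarz (Gram) inequality gives $A(x)C(x)-B^2(x)\ge 0$, so the integrand in \eqref{1.2} is well defined.

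The core step is the Kac representation of the zero count. For a $C^1$ function $f$ on $[a,b]$ whose zeros in $(a,b)$ are simple and lie in the open interval, one has $N_n(a,b)=\lim_{\eps\to 0^+}\frac{1}{2\eps}\int_a^b \mathbf{1}_{[-\eps,\eps]}(f(x))\,\abs{f'(x)}\,dx$. I would apply this to $f=G_n$. The hypothesis that $G_n'$ has at most $M$ zeros in $(a,b)$ for every coefficient vector is what makes the argument rigorous in two ways. On any interval $I\subset(a,b)$ on which $G_n$ is strictly monotone, the substitution $u=G_n(x)$ gives $\int_I \mathbf 1_{[-\eps,\eps]}(G_n)\,\abs{G_n'}\,dx=\int_{G_n(I)}\mathbf 1_{[-\eps,\eps]}(u)\,du\le 2\eps$; since the at most $M$ critical points split $(a,b)$ into at most $M+1$ monotone pieces, the regularized count is bounded by $M+1$ uniformly in $\eps$ and in the coefficients. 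This deterministic bound supplies the dominating function for dominated convergence. One also needs that a.s.\ the zeros of $G_n$ are simple, i.e.\ that $G_n(x)=G_n'(x)=0$ has no solution; for each fixed $x$ with $A(x)C(x)-B^2(x)>0$ this is a probability-zero event because $\Sigma(x)$ is nondegenerate, and a covering argument upgrades it to hold simultaneously for all $x$.

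With the interchange of $\E$ and $\lim_{\eps\to0^+}$ justified, I would evaluate
\[
\E[N_n(a,b)]=\lim_{\eps\to 0^+}\frac{1}{2\eps}\int_a^b \E\big[\mathbf 1_{[-\eps,\eps]}(G_n(x))\,\abs{G_n'(x)}\big]\,dx=\int_a^b \E\big[\,\abs{G_n'(x)}\,\big|\,G_n(x)=0\,\big]\,p_x(0)\,dx,
\]
which is the Cram\'er--Leadbetter form cited in \cite{CL}. For jointly Gaussian variables the conditional law of $G_n'(x)$ given $G_n(x)=0$ is centered Gaussian with variance $\sigma^2\,(A(x)C(x)-B^2(x))/A(x)$, and $\E\abs{Z}=s\sqrt{2/\pi}$ for $Z\sim\mathcal N(0,s^2)$. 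Substituting these together with $p_x(0)=(2\pi\sigma^2 A(x))^{-1/2}$, the factors of $\sigma$ cancel (reflecting the scale invariance of the zero set under $c_j\mapsto \sigma c_j$) and the constants collapse to $1/\pi$, yielding \eqref{1.2}.

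The main obstacle is not the final Gaussian computation but the rigorous justification of the limit interchange and of the almost-sure simplicity of the zeros. This is exactly where the at-most-$M$-zeros hypothesis does the work: it furnishes both the uniform bound $N_n(a,b)\le M+1$ via the monotone-piece estimate above, and—together with the nondegeneracy $A(x)>0$—the control needed to rule out multiple zeros, so that the regularized Kac integral converges to the genuine count before taking expectations.
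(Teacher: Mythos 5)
The paper itself does not prove Proposition \ref{prop1.1}; it defers to \cite{LPX} for a sketch, and that sketch is precisely the Kac--Rice route you follow: the regularized counting integral, dominated convergence justified by the at-most-$M$-critical-points hypothesis, and the two-dimensional Gaussian computation at the end. Your covariance matrix, the use of $A(x)\ge g_0^2>0$ for nondegeneracy of the marginal, the deterministic bound $\le M+1$ obtained by substituting $u=G_n(x)$ on each interval of monotonicity, and the final evaluation via the conditional law of $G_n'(x)$ given $G_n(x)=0$ (together with the cancellation of $\sigma$) are all correct; the remaining interchange of $\lim_{\eps\to0^+}$ with $\int_a^b dx$ is routine since $A,B,C$ are continuous and $A$ is bounded below on $[a,b]$.

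The one genuine gap is your treatment of almost-sure simplicity of the zeros. The assertion that a fixed-$x$ null event plus ``a covering argument'' gives $\P\bigl(\exists\, x\in(a,b):\ G_n(x)=G_n'(x)=0\bigr)=0$ is not a proof: this event is an uncountable union of null events, and such unions need not be null. The standard tool for this upgrade (Bulinskaya's lemma) requires the joint density of $(G_n(x),G_n'(x))$ to be bounded uniformly in $x$, i.e.\ $A(x)C(x)-B^2(x)\ge\delta>0$ on $[a,b]$; the hypotheses of the proposition guarantee only $A(x)>0$, and $AC-B^2$ can vanish (for instance at a common zero of all the $g_j'$), so that route is simply unavailable here. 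Fortunately the step is unnecessary, and your own monotone-piece computation shows why. Split $(a,b)$ at the (at most $M$) zeros of $G_n'$ into intervals of strict monotonicity; on each piece the substitution $u=G_n(x)$ shows that its contribution to $\frac{1}{2\eps}\int_a^b \mathbf{1}_{[-\eps,\eps]}(G_n)\,\abs{G_n'}\,dx$ tends to $1$ if $G_n$ has a zero in the interior of the piece (such a zero is automatically simple, since $G_n'\neq0$ there), to $\tfrac12$ if $G_n$ vanishes at an endpoint of the piece, and to $0$ otherwise. A multiple zero of $G_n$ is a zero of $G_n'$, hence an endpoint of exactly two pieces, and is therefore counted $\tfrac12+\tfrac12=1$. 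Thus the regularized integral converges to the number of \emph{distinct} zeros in $(a,b)$ --- exactly the paper's ``multiple zeros counted once'' convention --- with no simplicity assumption at all, provided only that $G_n(a)\neq0\neq G_n(b)$ and $G_n\not\equiv0$, each of which is the complement of a single null event. Replacing your simplicity claim by this observation closes the gap and leaves the rest of your argument intact.
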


Clearly, the original formula of Kac follows from this proposition for $g_j(x)=x^j,\ j=0,1,\ldots,n.$ For a sketch of the proof of Proposition \ref{prop1.1}, see \cite{LPX}. We note that multiple zeros are counted only once by the standard convention in all of the above results on real zeros. However, the probability of having a multiple zero for a polynomial with Gaussian coefficients is equal to 0, so that we have the same result on the expected number of zeros regardless whether they are counted with or without multiplicities.

\section{Random orthogonal polynomials}
Let $W=e^{-Q}$, where $Q: \R \longrightarrow [0, \infty)$ is continuous, and assume that all moments
\[
\int_{\R} x^jW^2(x) \, dx, \ j=0, 1, 2, \ldots,
\]
are finite. For $n\geq 0$, let
\begin{equation*}
p_{n}\left( x \right) =p_{n}\left( W^2, x \right) =\gamma _{n}x^{n} + \ldots
\end{equation*}
denote the $n$th orthonormal polynomial with $\gamma_n>0,$ so that
\begin{equation*}
\int p_{n}(x)p_{m}(x)W^2(x)\,dx = \left\{
                                    \begin{array}{ll}
                                      1, & \ m=n, \\
                                      0, & \ m\neq n.
                                    \end{array}
                                  \right.
\end{equation*}
Using the orthonormal polynomials $\{p_j\}_{j=0}^{\infty}$ as the basis, we consider the ensemble of random polynomials of the form
\begin{equation} \label{2.1}
P_n(x)=\sum_{j=0}^{n}c_{j}p_{j}(x),\quad n\in\N,
\end{equation}
where the coefficients $c_0,c_1,\ldots,c_n$ are i.i.d. random variables. Such a family is often called random orthogonal polynomials. If the coefficients have Gaussian distribution, one can apply Proposition \ref{prop1.1} to study the expected number of real zeros of random orthogonal polynomials. In particular, Das \cite{Da1} considered random Legendre polynomials, and found that $\E[N_n(-1,1)]$ is asymptotically equal to $n/\sqrt{3}$. Wilkins \cite{Wi2} improved the error term in this asymptotic relation by showing that $\E[N_n(-1,1)] = n/\sqrt{3} + o(n^\eps)$ for any $\eps>0.$ For random Jacobi polynomials, Das and Bhatt \cite{DB} concluded that $\E[N_n(-1,1)]$ is asymptotically equal to $n/\sqrt{3}$ too. They also stated estimates for the expected number of real zeros of random Hermite and Laguerre polynomials, but those arguments contain significant gaps. The authors recently showed \cite{LPX} that if the basis is given by the orthonormal polynomials associated to a finite Borel measure with compact support on the real line, then random linear combinations have $n/\sqrt{3} + o(n)$ expected real zeros under mild conditions on the weight. The second and the third authors \cite{PX} extended this asymptotic to random orthogonal polynomials associated with the Freud weights $W(x)=e^{-c|x|^\lambda},\ c>0,\ \lambda>1$. The results of this paper provide detailed information on the expected number of real zeros for random polynomials associated with a large class of weights defined on the whole real line. In particular, they cover the case of random Freud polynomials considered in \cite{PX}.

For the orthonormal polynomials $\{p_j(x)\}_{j=0}^{\infty}$, define the reproducing kernel by
\[
K_n(x,y)=\sum_{j=0}^{n-1}p_j(x)p_j(y),
\]
and the differentiated kernels by
\[
K_n^{(k,l)}(x,y)=\sum_{j=0}^{n-1}p_j^{(k)}(x)p_j^{(l)}(y),\quad k,l\in\N\cup\{0\}.
\]
We intend to apply Proposition \ref{prop1.1} with $g_j=p_j$, so that
\begin{align} \label{2.2}
A(x) = K_{n+1}(x,x), \quad B(x) = K_{n+1}^{(0,1)}(x,x) \quad \mbox{and}\quad C(x) = K_{n+1}^{(1,1)}(x,x).
\end{align}
Universality limits for the reproducing kernels of orthogonal polynomials (see Levin and Lubinsky \cite{LL2}-\cite{LL3}), and asymptotic results on zeros of random polynomials (cf. Pritsker \cite{Pr}) allow us to give asymptotics for the expected number of real zeros for a class of random orthogonal polynomials associated with weights from the class $\mathcal{F}(C^2)$ \cite{LL1}.
\begin{definition}
Let $W=e^{-Q}$, where $Q: \R \rightarrow [0, \infty)$ satisfies the following conditions:\\
(a) $Q'$ is continuous in $\R$ and $Q(0)=0$.\\
(b) $Q'$ is non-decreasing in $\R$, and $Q''$ exists in $\R\setminus\{0\}$.\\
(c) \[\displaystyle \lim_{\abs{t}\rightarrow \infty} Q(t)=\infty.\]
(d) The function
\[
T(t)=\frac{tQ'(t)}{Q(t)},\ t \neq 0,
\]
is quasi-increasing in $(0, \infty)$, in the sense that for some $C_1>0$,
\[
0<x<y \Rightarrow T(x) \leq C_1 T(y).
\]
We assume an analogous restriction for $y<x<0$. In addition, we assume that for some $\Lambda >1$,
\[
T(t)\geq \Lambda \text{ in } \R\setminus\{0\}.
\]
(e) There exists $C_2>0$ such that
\[
\displaystyle \frac{Q''(x)}{\abs{Q'(x)}}\leq C_2\frac{\abs{Q'(x)}}{Q(x)}, \ x\in \R\setminus\{0\}.
\]
Then we write $W \in \mathcal{F} (C^2)$.
\end{definition}

Our main result on the global asymptotic for the expected number of real zeros of random orthogonal polynomials is below.
\begin{theorem} \label{thm2.2}
Let $W=e^{-Q}\in \mathcal{F} (C^2)$, where $Q$ is even. If the function $T$ in the definition of $\mathcal{F} (C^2)$ satisfies
\begin{equation}\label{2.3}
\lim_{x\rightarrow \infty}T(x)=\alpha\in(1, \infty],
\end{equation}
then the expected number of real zeros of random orthogonal polynomials \eqref{2.1} with independent real Gaussian coefficients satisfies
\begin{align} \label{2.4}
\lim_{n\to\infty} \frac{1}{n} \E[N_n(\R)]= \frac{1}{\sqrt{3}}.
\end{align}
\end{theorem}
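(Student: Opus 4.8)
The plan is to combine Proposition~\ref{prop1.1} with reproducing-kernel universality, exploiting a cancellation that reduces everything to the weighted kernel. First I would note that $P_n'$ is a polynomial of degree at most $n-1$, hence has at most $n-1$ real zeros for every choice of coefficients, so Proposition~\ref{prop1.1} applies on each $(-L,L)$ with $M=n-1$; since the integrand in \eqref{1.2} is nonnegative and $\E[N_n(\R)]\le n$, letting $L\to\infty$ yields the global formula $\E[N_n(\R)]=\frac1\pi\int_{\R}\frac{\sqrt{AC-B^2}}{A}\,dx$, with $A,B,C$ as in \eqref{2.2}. The decisive observation is that this integrand is unchanged if $K_{n+1}$ is replaced by the weighted kernel $\tilde K_{n+1}(x,y)=W(x)W(y)K_{n+1}(x,y)$, reflecting the invariance of the zero set under multiplication by $W>0$. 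Concretely, writing $p_j=e^{Q}(p_jW)$ and differentiating, the factors $e^{2Q}$ together with every term carrying $Q'$ cancel in the combination $AC-B^2$ and in $A^2$, so the expected density becomes $\rho_n(x)=\frac1\pi\sqrt{\partial_x\partial_y\log\tilde K_{n+1}(x,y)\big|_{y=x}}$, which involves only the diagonal values $\tilde A=\tilde K_{n+1}(x,x)$, $\tilde B=\tilde K_{n+1}^{(0,1)}(x,x)$, $\tilde C=\tilde K_{n+1}^{(1,1)}(x,x)$.

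Next I would localize. By the Levin--Lubinsky universality theorem for weights in $\mathcal F(C^2)$ \cite{LL2,LL3}, uniformly for $x$ in the bulk $\abs{x}\le a_n(1-\eps)$ (with $a_n$ the Mhaskar--Rakhmanov--Saff number, and excluding a shrinking neighborhood of $0$ where $Q''$ may fail to exist), the rescaled kernel $f_n(u,v)=\tilde K_{n+1}\!\left(x+\tfrac{u}{\tilde\rho_n},x+\tfrac{v}{\tilde\rho_n}\right)/\tilde\rho_n$, where $\tilde\rho_n(x)=\tilde K_{n+1}(x,x)$, converges together with its first and second partials to the sine kernel $S(u-v)$, $S(t)=\frac{\sin\pi t}{\pi t}$. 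Since $\partial_x\partial_y\log\tilde K_{n+1}\big|_{y=x}=\tilde\rho_n^2\,\partial_u\partial_v\log f_n(0,0)$, and a short Taylor computation using $S(0)=1$, $S'(0)=0$, $S''(0)=-\pi^2/3$ gives $\partial_u\partial_v\log S(u-v)\big|_{(0,0)}=\pi^2/3$, one obtains the local law $\rho_n(x)=\frac1{\sqrt3}\,\tilde\rho_n(x)\,(1+o(1))=\frac1{\sqrt3}\,W^2(x)K_{n+1}(x,x)\,(1+o(1))$, uniformly on the bulk. This is the local statement that the theorem globalizes.

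To globalize I would integrate the local law against the total-mass identity $\int_{\R}W^2(x)K_{n+1}(x,x)\,dx=\sum_{j=0}^n\int p_j^2W^2=n+1$, so that $\nu_n\defeq\frac1{n+1}W^2K_{n+1}\,dx$ is a probability measure. Fixing $\eps>0$, integrating $\rho_n$ over the bulk and using the uniform local law together with the majorant $\rho_n\le\frac1\pi\sqrt{\tilde C/\tilde A}=O(\tilde\rho_n)$ (from Christoffel-- and Bernstein-type estimates), I would conclude $\frac1n\E[N_n(\text{bulk})]\to\frac1{\sqrt3}\,\mu_\infty([-1+\eps,1-\eps])$, where $\mu_\infty$ is the limiting equilibrium distribution (Ullman if $\alpha<\infty$, arcsine if $\alpha=\infty$) singled out by \eqref{2.3}, and $\nu_n\to\mu_\infty$ after scaling by $a_n$. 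Since $\mu_\infty$ is a probability measure with no mass at the edges, letting $\eps\to0$ recovers the constant $1/\sqrt3$, provided the complementary regions contribute $o(n)$: the tail $\abs{x}>a_n(1+\delta)$ carries $o(n)$ expected zeros because the $\nu_n$-mass there vanishes, while the two $\eps$-edge layers and the central neighborhood of $0$ are absorbed by the same majorant and the smallness of their $\nu_n$-mass as $\eps,\delta\to0$.

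The main obstacle is twofold. The bulk sine-kernel scaling degenerates near $\pm a_n$, where the equilibrium density vanishes like a square root and Airy/Bessel-type behavior takes over, so the asymptotic $\rho_n\sim\tilde\rho_n/\sqrt3$ is unavailable there; one must instead bound the expected edge zeros through a uniform density majorant and the vanishing edge mass of $\nu_n$, which is exactly where hypothesis \eqref{2.3} (guaranteeing regular behavior of $a_n$ and of the scaled kernel density up to the edge) enters. The second genuine difficulty is that Proposition~\ref{prop1.1} forces control not merely of $\tilde K_{n+1}$ but of its mixed second derivative on the diagonal, uniformly over the growing bulk: one must upgrade the universality limit to a $C^2$ statement with uniform error and certify that $\tilde A\tilde C-\tilde B^2$ stays comparable to $\tilde\rho_n^4$ without degeneration. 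Verifying these derivative bounds for the entire class $\mathcal F(C^2)$ is the technical heart of the argument.
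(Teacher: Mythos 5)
Your bulk analysis is sound and is essentially the paper's own route: the invariance of $(AC-B^2)/A^2$ under multiplication by $W$, combined with derivative universality for the kernel, is exactly what the authors use in their Theorem \ref{thm3.1}. Moreover, the ``$C^2$ upgrade'' of universality that you flag as the technical heart is not actually an obstacle: Theorem 1.6 of \cite{LL2} already gives the uniform asymptotics of $W^2(x)K_{n+1}^{(r,s)}(x,x)/\sigma_{n+1}(x)^{r+s+1}$ on the intervals $J_{n+1}(\eps)$, which is precisely the citation the paper invokes. (You do also assert, rather than prove, the convergence of the scaled kernel mass $\nu_n$ to $\mu_\alpha$; this is where hypothesis \eqref{2.3} enters analytically, and it is the content of the paper's Lemma \ref{lem3.2}.) The genuine gap is in your globalization. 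Your control of the regions where universality fails rests on the majorant $\rho_n \le \frac1\pi\sqrt{\tilde C/\tilde A} = O(\tilde\rho_n)$, with $\tilde\rho_n(x) = W^2(x)K_{n+1}(x,x)$, together with smallness of the $\nu_n$-mass of the tail and edge regions. But this majorant is false precisely where you need it: outside the Mhaskar--Rakhmanov--Saff interval one has $(p_jW)'(x) \approx -Q'(x)\,(p_jW)(x)$ for every $j\le n$ (since $p_j'/p_j \approx j/x$ is dominated by $Q'(x)$ there), so $\sqrt{\tilde C/\tilde A} \sim \abs{Q'(x)} \to \infty$ while $\tilde\rho_n(x)\to 0$ rapidly; in particular $\sqrt{\tilde C/\tilde A}$ is not $O(\tilde\rho_n)$ in the tail, and integrating $\abs{Q'}$ over $\abs{x}>a_n(1+\delta)$ gives $+\infty$, not $o(n)$. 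The cancellation in $\tilde A\tilde C-\tilde B^2$ that really makes the tail density small is exactly what the crude bound $\tilde A\tilde C - \tilde B^2 \le \tilde A\tilde C$ discards; the edge layers suffer the same problem, since the sine-kernel regime degenerates there and no uniform comparison is supplied. Because $1/\sqrt3 < 1$, the trivial bound $\E[N_n(\R)]\le n$ cannot close this gap either.

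The paper closes it by a completely different mechanism, and this is its real content beyond the local result (Theorems \ref{thm3.1} and \ref{thm2.3}): it proves in Theorem \ref{thm2.4} that the counting measures of \emph{all} (complex) zeros of the scaled polynomials converge weakly to $\mu_\alpha$ with probability one, using potential theory with external fields --- the almost sure norm asymptotics $\lim_n \|P_nW\|_{L^\infty(\R)}^{1/n}=1$ (Lemma \ref{lem3.5}), the leading-coefficient asymptotics $\gamma_n^{1/n}a_n \to 2$ or $2e^{1/\alpha}$ (Lemmas \ref{lem3.6}--\ref{lem3.7}), and the theorems on asymptotically extremal monic polynomials of Blatt--Saff--Simkani \cite{BSS} and Saff--Totik \cite{ST}. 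Corollary \ref{cor2.5} then converts almost sure convergence into convergence of expected zero counts via dominated convergence, and since $\R\setminus(-1,1)$ is closed with $\mu_\alpha(\R\setminus(-1,1))=0$, this yields $\E[N_n^*(\R\setminus(-1,1))]=o(n)$ --- exactly the estimate your majorant was supposed to deliver, obtained without any kernel bounds at the edge or in the tail. If you wish to keep a purely kernel-based proof, you would need genuine estimates on $\tilde A\tilde C-\tilde B^2$ (not merely on $\tilde C/\tilde A$) through the edge and tail regimes, which is a substantial undertaking that the potential-theoretic argument avoids.
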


Theorem \ref{thm2.2} is a combination of Theorem \ref{thm2.3} and Corollary \ref{cor2.5} given below. Define the Ullman distribution $\mu_{\alpha}$ for $0<\alpha<\infty$ by
\[
\mu_{\alpha}^{'}(x)=\frac{\alpha}{\pi}\int_{\abs{x}}^1\frac{t^{\alpha-1}}{\sqrt{t^2-x^2}}\,dt,\quad x\in[-1,1],
\]
and for $\alpha=\infty$, the arcsine distribution $\mu_{\infty}$ by
\[
\mu_{\infty}^{'}(x)=\frac{1}{\pi\sqrt{1-x^2}},\quad x\in[-1,1],
\]
see \cite{ST} and \cite{LL1}. We use the contracted version of $P_n$:
\begin{align} \label{2.5}
P_n^*(s):=P_n(a_ns),\quad n\in\N,
\end{align}
where $a_n$ is the Mhaskar-Rakhmanov-Saff number associated with the weight $W$, see \cite{LL1}, \cite{Mh}, \cite{ST} and Section 3 below.

For any set $E\subset\C$, $N_n^*(E)$ denotes the number of zeros of a random polynomial $P_n^*(s)$ located in $E$. The expected number of zeros of $P_n^*(s)$ in $E$ is given by $\E[N_n^*(E)].$ We now state the local result on the asymptotic of $\E[N_n^*\left([a,b]\right)]$ for intervals $[a,b]\subset(-1,1).$

\begin{theorem} \label{thm2.3}
Let $W=e^{-Q} \in \mathcal{F} (C^2)$, where $Q$ is even. Assume that the function $T$ in the definition of $\mathcal{F} (C^2)$ satisfies \eqref{2.3}. If $[a,b] \subset (-1,1)$ is any closed interval, then
\begin{equation} \label{2.6}
\lim_{n\rightarrow\infty} \frac{1}{n}\E\left[N_n^*\left([a,b]\right)\right] = \frac{1}{\sqrt{3}} \mu_{\alpha}([a,b]).
\end{equation}
\end{theorem}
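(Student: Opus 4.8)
The plan is to start from the exact Kac--Rice formula of Proposition~\ref{prop1.1} and feed into it the bulk universality asymptotics for the reproducing kernel. Since $P_n^*$ is a polynomial of degree at most $n$, its derivative has at most $n-1$ real zeros for every realization of the coefficients, so the hypothesis of Proposition~\ref{prop1.1} is met (with $M=n-1$ for each fixed $n$, and $q_0=p_0$ a nonzero constant). Writing $q_j(s)=p_j(a_ns)$ and applying \eqref{1.2}--\eqref{1.3} to $P_n^*(s)=\sum_{j=0}^n c_j q_j(s)$, a direct differentiation gives $A^*(s)=A(a_ns)$, $B^*(s)=a_nB(a_ns)$ and $C^*(s)=a_n^2C(a_ns)$ with $A,B,C$ as in \eqref{2.2}. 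Hence
\[
\E[N_n^*([a,b])]=\frac1\pi\int_a^b a_n\,\frac{\sqrt{A(a_ns)C(a_ns)-B^2(a_ns)}}{A(a_ns)}\,ds,
\]
and the whole problem reduces to the pointwise asymptotics of the integrand on the compact bulk interval $[a,b]\subset(-1,1)$.

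The key simplification is that the ratio $\sqrt{AC-B^2}/A$ is unchanged if the basis $\{p_j\}$ is replaced by $\{\phi p_j\}$ for any fixed positive $\phi\in C^1$: a short computation shows $\widehat A\widehat C-\widehat B^2=\phi^4(AC-B^2)$ and $\widehat A=\phi^2A$, so the $\phi$-factors cancel. Taking $\phi=W$, I may therefore compute the integrand from the weighted kernel $\tilde K_n(x,y)=W(x)W(y)K_n(x,y)$, to which the Levin--Lubinsky theory applies. I would then invoke bulk universality: for $x_0=a_ns$ with $s\in[a,b]$, the rescaled kernel
\[
L_n(u,v)=\frac{\tilde K_{n+1}\!\left(x_0+\tfrac{u}{\tilde K_{n+1}(x_0,x_0)},\,x_0+\tfrac{v}{\tilde K_{n+1}(x_0,x_0)}\right)}{\tilde K_{n+1}(x_0,x_0)}
\]
converges to the sine kernel $S(u,v)=\sin\pi(u-v)/(\pi(u-v))$, uniformly for $u,v$ in compact subsets of $\C$. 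Since $L_n$ and $S$ are entire in $(u,v)$, Cauchy's estimates upgrade this to convergence of all partial derivatives. Evaluating the first and mixed second derivatives at $u=v=0$ and using $\partial_vS(0,0)=0$, $\partial_u\partial_vS(0,0)=\pi^2/3$ yields, with $\kappa=\tilde K_{n+1}(x_0,x_0)$, that $\widehat B/\kappa^2\to0$ and $\widehat C/\kappa^3\to\pi^2/3$; consequently
\[
\frac{\sqrt{A(x_0)C(x_0)-B^2(x_0)}}{A(x_0)}=\frac{\sqrt{\widehat A\widehat C-\widehat B^2}}{\widehat A}=\frac{\pi}{\sqrt3}\,\tilde K_{n+1}(x_0,x_0)\,\bigl(1+o(1)\bigr).
\]

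To finish I would combine this with the weighted reproducing-kernel (Christoffel function) asymptotics for the class $\mathcal F(C^2)$ established by Levin and Lubinsky \cite{LL1}, namely $\tfrac{a_n}{n}\tilde K_{n+1}(a_ns,a_ns)\to\mu_\alpha'(s)$ for $s\in(-1,1)$. Writing the expected density as $\rho_n(x)=\tfrac1\pi\sqrt{A(x)C(x)-B^2(x)}/A(x)$, the two asymptotics give $\tfrac1n\,a_n\rho_n(a_ns)\to\tfrac1{\sqrt3}\mu_\alpha'(s)$ on $[a,b]$, and integrating the identity $\E[N_n^*([a,b])]=\int_a^b a_n\rho_n(a_ns)\,ds$ produces \eqref{2.6}.

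The main obstacle is analytic rather than algebraic: the two limits above are pointwise, whereas the conclusion requires passing the limit under $\int_a^b$. I expect the hard part to be establishing that the universality limit, together with the derivative convergence and the Christoffel asymptotics, holds \emph{uniformly} for $x_0=a_ns$ as $s$ ranges over the compact bulk set $[a,b]$, and that the normalized integrand $\tfrac1n\,a_n\rho_n(a_ns)$ is uniformly bounded there, so that dominated (indeed uniform) convergence applies. Controlling the scaled point $x_0=a_ns$ as $n\to\infty$, so that it stays in the region where the Levin--Lubinsky universality is valid with uniform error terms, is the technical crux; the index shift from $n+1$ to $n$ and the positivity of $AC-B^2$ (Cauchy--Schwarz) are routine by comparison.
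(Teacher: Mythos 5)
Your strategy is the same as the paper's (Kac--Rice formula, Levin--Lubinsky kernel asymptotics in the bulk, convergence of the scaled density, then dominated convergence), and your observation that $\sqrt{AC-B^2}/A$ is unchanged when $\{p_j\}$ is replaced by $\{\phi p_j\}$ is a clean way to organize the cancellation of the $Q'$ terms. But two of your steps have genuine gaps. The first is the Cauchy-estimate step: the weighted kernel $\tilde K_{n+1}(x,y)=W(x)W(y)K_{n+1}(x,y)$ is \emph{not} entire in the displacements --- $W=e^{-Q}$ is defined only on $\R$ and $Q$ is merely $C^2$ --- so your $L_n(u,v)$ is not even defined for complex $u,v$, and Cauchy's estimates cannot be applied to it. If you instead freeze the weight at $x_0$ (the natural entire substitute, $W^2(x_0)K_{n+1}(x_0+u/\kappa,x_0+v/\kappa)/\kappa$), the limit is no longer the sine kernel: since $x_0=a_ns$ grows with $n$, one has $Q(x_0+u/\kappa)-Q(x_0)=u\,Q'(x_0)/\kappa+o(1)$ with $Q'(x_0)/\kappa$ bounded but \emph{not} $o(1)$ in the bulk (unlike the compactly supported case), so the entire version behaves like $S(u,v)\,e^{(u+v)Q'(x_0)/\kappa}$ rather than converging to $S(u,v)$, and its $\partial_v$-derivative at the origin is $Q'(x_0)/\kappa+o(1)\neq o(1)$. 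This is exactly why Theorem 1.6 of \cite{LL2}, which the paper invokes, carries the terms $\left(Q'/\sigma_n\right)^{r+s-j-k}$. Your intermediate claims $\widehat B/\kappa^2\to0$ and $\widehat C/\kappa^3\to\pi^2/3$ are in fact true, but they must be \emph{derived} from those derivative asymptotics (after which your invariance identity performs the cancellation); they do not follow from universality plus analyticity.

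The second gap is the claimed Christoffel asymptotics $\frac{a_n}{n}\tilde K_{n+1}(a_ns,a_ns)\to\mu_\alpha'(s)$, which you attribute to \cite{LL1}. What \cite{LL1}--\cite{LL2} provide is $\tilde K_{n+1}(x,x)=(1+o(1))\,\sigma_{n+1}(x)$ uniformly on the bulk, where $\sigma_{n+1}$ is the $n$-dependent equilibrium density; the identification $\lim_{n\to\infty}\frac{a_n}{n}\sigma_{n+1}(a_ns)=\mu_\alpha'(s)$ under the sole hypothesis $T(x)\to\alpha$ is precisely the paper's Lemma \ref{lem3.2}, whose proof is a substantial part of the whole argument (a dominated-convergence argument with explicit majorants when $\alpha<\infty$, and a splitting argument producing the arcsine density when $\alpha=\infty$). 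That lemma is the only place where hypothesis \eqref{2.3} enters, so quoting its conclusion as known amounts to assuming the main new ingredient of the theorem. Finally, the uniformity/domination issue you correctly flag at the end is settled in the paper by the uniformity of \cite{LL2} on $J_{n+1}(\eps)$, the majorant $\sigma_{n+1}^*(s)\le C/\sqrt{1-s^2}$ from Theorem 1.11(V) of \cite{LL1}, and the index-shift limit $L_{n+1}\left(L_n^{[-1]}(s)\right)\to s$; all of these would need to be supplied to complete your argument.
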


We will establish a generalization of Theorem 2.3 for non-even weights in Section 3. Define the normalized zero counting measure $\tau_n=\frac{1}{n}\sum_{k=1}^n \delta_{z_k}$ for the scaled  polynomial $P_n^*(s)$ of \eqref{2.5}, where $\{z_k\}_{k=1}^n$ are its zeros, and $\delta_z$ denotes the unit point mass at $z$. We determine the weak limit of $\tau_n$ for random polynomials with quite general random coefficients $\{c_j\}_{j=0}^\infty.$

\begin{theorem} \label{thm2.4}
Let the coefficients $\{c_j\}_{j=0}^\infty$ of random orthogonal polynomials \eqref{2.1} be complex i.i.d. random variables such that $\E[|\log|c_0||]<\infty$. If $W=e^{-Q}\in \mathcal{F} (C^2)$, where $Q$ is even, and if the function $T$ in the definition of $\mathcal{F} (C^2)$ satisfies \eqref{2.3}, then the normalized zero counting measures $\tau_n$ for the scaled  polynomials $P_n^*(s)$ converge weakly to $\mu_{\alpha}$ with probability one.
\end{theorem}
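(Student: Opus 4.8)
The plan is to pass to logarithmic potential theory and to use the scheme for zero counting measures of random polynomials developed in \cite{Pr}. Writing the contracted polynomial as $P_n^*(s)=\ell_n\prod_{k=1}^n(s-z_k)$ with leading coefficient $\ell_n=c_n\gamma_n a_n^n$, and setting $U^{\mu}(s)=\int\log\frac{1}{|s-w|}\,d\mu(w)$, we have the exact identity $\frac1n\log|P_n^*(s)|=\frac1n\log|\ell_n|-U^{\tau_n}(s)$ for all $s\in\C$. Hence it suffices to prove that, with probability one, $U^{\tau_n}(s)\to U^{\mu_\alpha}(s)$ on the unbounded component $\Omega=\C\setminus[-1,1]$; since the zeros $z_k$ will be shown to stay in a fixed compact set, this forces $\tau_n\to\mu_\alpha$ weakly by the unicity theorem for logarithmic potentials (two probability measures supported on $[-1,1]$ whose potentials agree throughout $\Omega$ coincide).

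Randomness enters only through the coefficients, and I would dispose of it first. Because the $c_j$ are i.i.d. with $\E[|\log|c_0||]<\infty$, a Borel--Cantelli argument gives, almost surely,
\[
\frac1n\log|c_n|\to0 \qquad\text{and}\qquad \frac1n\log\Big(\max_{0\le j\le n}|c_j|\Big)\to0 .
\]
This is exactly where the two-sided log-moment hypothesis is used: the first limit (in both directions) keeps the top-degree term from being abnormally small or large, and the second controls the whole family of partial sums. In particular $\frac1n\log|\ell_n|\to C$ almost surely, where $C=\lim_n\frac1n\log(\gamma_n a_n^n)$ is a deterministic constant. From here on I work on this almost sure event, so all remaining estimates are deterministic.

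The deterministic backbone is the asymptotic theory of $\mathcal{F}(C^2)$ weights with $T\to\alpha$. I would use three facts: (i) the contracted zeros of $p_n$ converge weakly to $\mu_\alpha$, so $\frac1n\log|p_n(a_ns)|\to g(s):=-U^{\mu_\alpha}(s)+C$ locally uniformly on $\Omega$; (ii) these weights are regular in the sense of Stahl--Totik, whence $\frac1{2n}\log K_{n+1}(a_ns,a_ns)\to g(s)$ on $\Omega$; (iii) Mhaskar--Saff infinite--finite range inequalities confine the weighted mass to a fixed neighborhood of $[-a_n,a_n]$. The upper bound is then immediate off the support: by Cauchy--Schwarz,
\[
|P_n^*(s)|\le\Big(\sum_{j=0}^n|c_j|^2\Big)^{1/2}\big(K_{n+1}(a_ns,a_ns)\big)^{1/2},
\]
so the coefficient control together with (ii) yields $\limsup_n\frac1n\log|P_n^*(s)|\le g(s)$ locally uniformly on $\Omega$, that is, $\liminf_n U^{\tau_n}(s)\ge U^{\mu_\alpha}(s)$ there.

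The reverse inequality $\liminf_n\frac1n\log|P_n^*(s)|\ge g(s)$ on $\Omega$ is the technical heart, and I would follow the method of \cite{Pr}. Naive term-by-term domination of the sum by $c_np_n(a_ns)$ fails, since the ratios of consecutive orthonormal polynomials are not exponentially small; instead one anchors on the leading coefficient through Jensen's formula, using $\frac1n\log|\ell_n|\to C$ to control circle averages of $\log|P_n^*|$ and then upgrading to a pointwise lower bound quasi-everywhere via subharmonicity. Combining the two bounds gives $U^{\tau_n}\to U^{\mu_\alpha}$ on $\Omega$; tightness of $\{\tau_n\}$ from (iii) (no escape of mass to infinity) produces subsequential weak limits $\tau$ with $U^\tau=U^{\mu_\alpha}$ on $\Omega$, and unicity identifies $\tau=\mu_\alpha$, so $\tau_n\to\mu_\alpha$ almost surely. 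I expect the genuine difficulties to be precisely this lower bound and the confinement of the zeros: one must simultaneously rule out atypical cancellation in the random sum on $\Omega$ and show that no positive proportion of the $z_k$ drifts to infinity or concentrates on sets of zero capacity, and it is here that the regularity of the weight and the Mhaskar--Saff inequalities for $\mathcal{F}(C^2)$ carry the argument.
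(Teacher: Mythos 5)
Your skeleton --- almost-sure coefficient control from $\E[|\log|c_0||]<\infty$, an upper bound for $\frac1n\log|P_n^*|$ off $[-1,1]$, a matching lower bound, and a unicity argument for logarithmic potentials --- is the standard potential-theoretic route, and your probabilistic step (Borel--Cantelli giving $|c_n|^{1/n}\to 1$ and $\max_{0\le j\le n}|c_j|^{1/n}\to 1$ a.s.) is exactly what the paper uses via Lemma 4.2 of \cite{Pr}. But there is a genuine gap at what you yourself call the ``technical heart.'' First, the a.s.\ lower bound $\liminf_n\frac1n\log|P_n^*(s)|\ge g(s)$ quasi-everywhere on $\C\setminus[-1,1]$, together with confinement of the zeros, is never argued: you name Jensen's formula and subharmonicity, but producing a pointwise (even q.e.) lower bound for a random sum is precisely the step where naive arguments fail, and the method of \cite{Pr} that you invoke does not in fact prove such pointwise bounds --- it reduces to deterministic Jentzsch--Szeg\H{o}-type theorems that need only a norm upper bound plus the monic normalization. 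Second, your claim that ``the zeros $z_k$ will be shown to stay in a fixed compact set'' is unjustified and in general false; at best one can show $\tau_n(\C\setminus K)\to 0$ in a suitable sense, and that itself is part of what must be proved, so the unicity step as you set it up does not launch. Third, your deterministic inputs (i) and (ii) --- the contracted zero distribution of $p_n$ itself and $n$th-root Christoffel-function asymptotics at complex points --- are not off-the-shelf facts for the class $\mathcal{F}(C^2)$ with $T\to\alpha$: they are essentially instances of the theorem being proved (for particular coefficient choices), and \cite{LL1} supplies bounds on the real line, not $n$th-root asymptotics in the plane, so leaning on them is circular unless you prove them too.

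For comparison, the paper sidesteps all of this by passing to the monic polynomials $M_n=P_n^*/(c_n\gamma_n a_n^n)$ and verifying a \emph{one-sided} sup-norm condition only: for $\alpha=\infty$, $\limsup_n\|M_n\|_{L^\infty([-1,1])}^{1/n}\le 1/2$, which feeds into Theorem 2.1 of \cite{BSS}; for $\alpha\in(1,\infty)$, $\limsup_n\|w^nM_n\|_{L^\infty([-1,1])}^{1/n}\le e^{-1/\alpha}/2=e^{-F_w}$ with $w$ the standard Freud weight, which feeds into Theorem 4.2 of \cite[p.\ 170]{ST} on asymptotically extremal polynomials. Those two citations encapsulate exactly the Jensen-formula/descent machinery you propose to rebuild by hand: once the polynomials are monic, the upper bound alone forces the zero distribution, with no pointwise lower bound on $|P_n^*|$ and no separate tightness argument. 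The upper bound itself comes from elementary ingredients you did not need: Parseval ($\|P_nW\|_{L^2(\R)}^2=\sum_j|c_j|^2$), Nikolskii inequalities, the leading-coefficient asymptotics $\gamma_n^{1/n}a_n\to 2e^{1/\alpha}$ (or $2$ when $\alpha=\infty$), and a comparison of the contracted weight $w_n$ with $w$. The realistic way to complete your proposal is to replace the by-hand potential argument with these deterministic theorems --- at which point your proof becomes the paper's.
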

Related results on the asymptotic zeros distribution of random orthogonal polynomials with varying weights were proved by Bloom \cite{Bl} and Bloom and Levenberg \cite{BL}. Theorem \ref{thm2.4} permits us to find asymptotics for the expected number of zeros in various sets. In particular, we need the following corollary for the proof of Theorem \ref{thm2.2}.

\begin{corollary} \label{cor2.5}
Suppose that the assumptions of Theorem \ref{thm2.4} hold. If $E\subset\C$ is any compact set satisfying $\mu_\alpha(\partial E)=0,$ then
\begin{equation} \label{2.7}
\lim_{n\rightarrow\infty} \frac{1}{n}\E\left[N_n^*(E)\right] = \mu_\alpha(E).
\end{equation}
\end{corollary}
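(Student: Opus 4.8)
The plan is to deduce Corollary \ref{cor2.5} from Theorem \ref{thm2.4} by converting the almost-sure weak convergence of the random counting measures $\tau_n$ into convergence of expectations. The key observation is that $\frac{1}{n}N_n^*(E) = \tau_n(E)$, so that $\frac{1}{n}\E[N_n^*(E)] = \E[\tau_n(E)]$, and we want to show this converges to $\mu_\alpha(E)$.

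First I would use the portmanteau theorem: since Theorem \ref{thm2.4} gives $\tau_n \to \mu_\alpha$ weakly with probability one, and since $E$ is a compact set with $\mu_\alpha(\partial E) = 0$ (a $\mu_\alpha$-continuity set), the portmanteau characterization of weak convergence yields $\tau_n(E) \to \mu_\alpha(E)$ almost surely. Thus the random variables $\tau_n(E)$ converge pointwise (in $\omega$) to the constant $\mu_\alpha(E)$.

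The remaining step is to pass from almost-sure convergence to convergence of expectations, which I would accomplish by dominated convergence. The crucial point is that $0 \le \tau_n(E) \le 1$ for every $n$ and every realization of the coefficients, since $\tau_n$ is a probability measure (it has total mass one, being $\frac{1}{n}$ times a sum of $n$ point masses, counting the $n$ zeros of $P_n^*$ with multiplicity). This uniform bound by the integrable constant $1$ lets us apply the dominated convergence theorem to conclude
\begin{equation*}
\lim_{n\to\infty} \frac{1}{n}\E[N_n^*(E)] = \lim_{n\to\infty}\E[\tau_n(E)] = \E\left[\lim_{n\to\infty}\tau_n(E)\right] = \mu_\alpha(E),
\end{equation*}
which is precisely \eqref{2.7}.

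I expect the proof to be short and the main subtlety to lie in justifying that $\tau_n$ is genuinely a probability measure with total mass one, i.e.\ that $P_n^*$ has exactly $n$ zeros in $\C$ counted with multiplicity. This holds because $P_n$ has degree exactly $n$ (the leading coefficient $c_n\gamma_n$ is almost surely nonzero when $c_n \neq 0$), and the scaling $s \mapsto a_n s$ preserves the zero count. One should confirm that the event $c_n = 0$ has probability zero (true for the continuous distributions considered, and more generally harmless since $\E[|\log|c_0||]<\infty$ forces $c_0 \ne 0$ almost surely), so that the uniform bound $\tau_n(E)\le 1$ holds almost surely, which is all that dominated convergence requires.
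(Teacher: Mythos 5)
Your proposal is correct and follows essentially the same route as the paper: the paper likewise passes from the almost-sure weak convergence $\tau_n \to \mu_\alpha$ of Theorem \ref{thm2.4} to $\tau_n(E)\to\mu_\alpha(E)$ almost surely via the continuity-set (portmanteau) argument, citing Billingsley, and then applies dominated convergence using the uniform bound $\tau_n(E)\le 1$ to conclude $\E[\tau_n(E)]=\frac{1}{n}\E[N_n^*(E)]\to\mu_\alpha(E)$. Your added remark that $\E[|\log|c_0||]<\infty$ forces $c_n\neq 0$ almost surely, so that $\tau_n$ genuinely has total mass one, is a point the paper leaves implicit but is correct.
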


It is of interest to relax conditions on random coefficients $c_j$, e.g., by considering probability distributions from the domain of attraction of the normal law as in \cite{IM1,IM2}.

\section{Proofs}

Our proofs require detailed knowledge of potential theory with external fields, see \cite{LL1} and \cite{ST}.

Let $W$ be a continuous nonnegative weight function on $\R$ such that $W$ is not identically zero and $\lim_{|x|\to\infty} |x|\,W(x) = 0.$ Set $Q(x):= - \log W(x).$ The weighted equilibrium measure $\mu _W$ of $\R$  is the unique probability measure with compact support $S_W=\text{ supp } \mu_W \subset \R$ that minimizes the energy functional
\begin{equation*}
I[\nu] = \iint \frac{1}{|z-t|}\, d\nu(t) d\nu(z)+2\int Q\, d\nu
\end{equation*}
amongst all probability measures $\nu$ with support on $\R$. It satisfies
\[
\int \log \frac{1}{|z-t|}\, d\mu_W(t) +Q(z)=C,\quad z \in S_W,
\]
and
\[
\int \log \frac{1}{|z-t|}\, d\mu_W(t) +Q(z)\geq C,\quad z \in \R,
\]
where $C$ is a constant.

For a weight function $W(x)=e^{-Q(x)}$, where $Q$ is often assumed convex on $\R$, the Mhaskar-Rakhmanov-Saff numbers
\[
a_{-n}<0<a_n
\]
are defined for $n \geq 1$ by the relations
\[
n=\frac{1}{\pi}\int_{a_{-n}}^{a_n}\frac{xQ'(x)}{\sqrt{(x-a_{-n})(a_n-x)}} \, dx
\]
and
\[
0=\frac{1}{\pi}\int_{a_{-n}}^{a_n}\frac{Q'(x)}{\sqrt{(x-a_{-n})(a_n-x)}} \, dx.
\]
We also let
\[
\delta_n=\frac{1}{2}(a_n+\abs{a_{-n}})\text{ and }\beta_n=\frac{1}{2}(a_n+a_{-n}).
\]
For even $Q$, $a_{-n}=-a_n$, and we may define $a_n$ by
\begin{equation}\label{mrsnumber}
\frac{2}{\pi}\int_0^1\frac{a_ntQ'(a_nt)}{\sqrt{1-t^2}}\, dt=n.
\end{equation}
Existence and uniqueness of these numbers are established in the monographs \cite{LL1}, \cite{Mh}, \cite{ST}, but go back to earlier work of Mhaskar, Saff, and Rakhmanov. One illustration of their role is the Mhaskar-Saff identity:
\[
||PW||_{L^\infty (\R)}=||PW||_{L^\infty ([a_{-n}, a_n])},
\]
which is valid for all polynomials $P$ of degree at most $n$. 
We define the Mhaskar-Rakhmanov-Saff interval $\Delta_n$ as $\Delta_n :=[a_{-n}, a_n]$. The linear transformation
\[
L_n(x)=\frac{x-\beta_n}{\delta_n},\ x\in \R,
\]
maps $\Delta_n$ onto $[-1, 1]$. Its inverse is
\[
L_n^{[-1]}(s)=\beta_n+\delta_ns,\ s\in \R.
\]
For $\varepsilon\in(0,1)$, we let
\[
J_n(\varepsilon)=L_n^{[-1]}[-1+\varepsilon, 1-\varepsilon]=[a_{-n}+\varepsilon\delta_n, a_n-\varepsilon\delta_n].
\]
The equilibrium density is defined as
\[
\sigma_n(x)=\displaystyle \frac{\sqrt{(x-a_{-n})(a_n-x)}}{\pi^2}\int_{a_{-n}}^{a_n} \frac{Q'(s)-Q'(x)}{s-x}\frac{ds}{\sqrt{(s-a_{-n})(a_n-s)}}, \, x\in \Delta_n.
\]
It satisfies the following equilibrium equations \cite[p. 41]{LL1}:
\[
\displaystyle \int_{a_{-n}}^{a_n} \log \frac{1}{\abs{x-s}}\sigma_n(s)\, ds + Q(x)=C, \, x\in \Delta_n,
\]
and
\[
\displaystyle \int_{a_{-n}}^{a_n} \log \frac{1}{\abs{x-s}}\sigma_n(s)\, ds + Q(x)\geq C, \, x\in \R.
\]
Note that the measure $\sigma_n(x)\, dx$ has total mass $n$ on $\Delta_n$:
\[
\displaystyle \int_{a_{-n}}^{a_n} \sigma_n(x)\, dx=n.
\]
We also define the normalized version of $\sigma_n$ as follows:
\begin{align*}
\sigma_n^*(s) := \frac{\delta_n}{n}\sigma_n(L_n^{[-1]}(s)),\quad s\in [-1,1].
\end{align*}
Note that $\sigma_n^*(s)\, ds$ is a unit measure supported on $[-1,1]$:
\[
\int_{-1}^1\sigma_n^*(s) \, ds=1.
\]
For details on $\sigma_n$ and $\sigma_n^*$, one should consult the book \cite{LL1}.

In particular, the Ullman distribution $\mu'_{\alpha}$ is the normalized equilibrium density for the standard Freud weight $w(x)=e^{-\gamma_\alpha \abs{x}^\alpha}$ on $\R$, see Theorem 5.1 of \cite[p. 240]{ST}, where
\[
\gamma_\alpha=\frac{\Gamma(\frac{\alpha}{2})\Gamma(\frac{1}{2})}{2\Gamma(\frac{\alpha}{2}+\frac{1}{2})},
\]
An alternative formula for the Ullman distribution follows from that for $\sigma_n$ above, namely,
\begin{equation}\label{alternative}
\mu_{\alpha}^{'}(x)=\frac{2\sqrt{1-x^2}}{\pi^2B_{\alpha}}\int_0^1\frac{t^\alpha-x^\alpha}{t^2-x^2}\frac{dt}{\sqrt{1-t^2}}, \quad x\in[-1,1],
\end{equation}
where
\[
B_\alpha=\frac{2}{\pi}\int_0^1\frac{t^\alpha}{\sqrt{1-t^2}}\, dt.
\]

For $n\geq 1$, we also define the square root factor
\begin{equation}\label{rho}
\rho_n(x)=\sqrt{(x-a_{-n})(a_n-x)},\quad x\in \Delta_n.
\end{equation}
In the sequel $C, C_1, C_2, \cdots$ denote constants independent of $n, x$, and polynomials of degree $\leq n$. The same symbol does not necessarily denote the same constant in different occurrences. Given sequences $\{c_n\}, \{d_n\}$, we write
\[
c_n \sim d_n
\]
if there exist positive constants $C_1$ and $C_2$ such that for $n \geq 1$,
\[
C_1\leq c_n/d_n\leq C_2.
\]
Similar notation is used for functions and sequences of functions.

We start with a general result, our only one that allows non-even weights. In this more general setting, $P_n^*$ is given by
\[
P_n^*(s) = P_n\left(L_n^{[-1]}(s)\right),
\]
rather then by \eqref{2.5}.

\begin{theorem} \label{thm3.1}
If $W=e^{-Q}\in \mathcal{F} (C^2)$ and $[a,b] \subset (-1,1)$ is any given closed interval, then as $n \to \infty$,
\begin{equation*}
\frac{1}{n}\E\left[N_n^*\left([a,b]\right)\right] = \frac{1+o(1)}{\sqrt{3}} \int_a^b \sigma_{n+1}^*(y)\, dy.
\end{equation*}
\end{theorem}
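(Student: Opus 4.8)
\section*{Proof proposal for Theorem~\ref{thm3.1}}

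The plan is to apply Proposition~\ref{prop1.1} to $P_n^*(s)=\sum_{j=0}^{n}c_j\,g_j(s)$ with $g_j(s)=p_j(L_n^{[-1]}(s))=p_j(\beta_n+\delta_n s)$, and then to identify the resulting Kac integrand with $\tfrac{\pi}{\sqrt3}\sigma_{n+1}$ in the bulk. The hypotheses hold: $g_0=p_0$ is a nonzero constant, and for every coefficient vector $(P_n^*)'$ is a polynomial of degree at most $n-1$, so it has at most $M=n-1$ zeros on $(a,b)$. Since $g_j'(s)=\delta_n\,p_j'(\beta_n+\delta_n s)$, writing $x=\beta_n+\delta_n s$ one finds $A^*(s)=A(x)$, $B^*(s)=\delta_n B(x)$ and $C^*(s)=\delta_n^2 C(x)$ with $A,B,C$ as in~\eqref{2.2}; after the substitution $x=L_n^{[-1]}(s)$ this gives
\begin{equation*}
\E\!\left[N_n^*([a,b])\right]=\frac{1}{\pi}\int_{L_n^{[-1]}(a)}^{L_n^{[-1]}(b)}\frac{\sqrt{A(x)C(x)-B^2(x)}}{A(x)}\,dx .
\end{equation*}
Everything thus reduces to the pointwise asymptotics of this integrand on the fixed bulk set $L_n^{[-1]}([a,b])$.

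The key simplification I would use is to pass to the normalized correlation $r_n(x,y)=K_{n+1}(x,y)/\sqrt{K_{n+1}(x,x)K_{n+1}(y,y)}$, in which the weight $W$ cancels between numerator and denominator (this is what ultimately lets the argument run for non-even $W$). Writing $v(x)=K_{n+1}(x,x)=A(x)$ so that $K_{n+1}(x,y)=r_n(x,y)\sqrt{v(x)v(y)}$, and using $r_n(x,x)\equiv1$, which forces $\partial_x r_n=\partial_y r_n=0$ on the diagonal, a direct differentiation yields $B(x)=\tfrac12 v'(x)$ and $C(x)=v(x)\,\partial_x\partial_y r_n(x,x)+\tfrac{(v'(x))^2}{4v(x)}$. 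These combine into the clean identity
\begin{equation*}
\frac{A(x)C(x)-B^2(x)}{A^2(x)}=\partial_x\partial_y r_n(x,y)\big|_{y=x},
\end{equation*}
so the drift $v'/v$ cancels and only the curvature of the correlation on the diagonal survives.

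Next I would invoke the bulk universality of Levin--Lubinsky \cite{LL2,LL3} for $W\in\mathcal F(C^2)$ in differentiated form. With $d_n(x_0)=W^2(x_0)K_{n+1}(x_0,x_0)$, the scaled correlation $\phi_n(u,v)=r_n\!\left(x_0+\tfrac{u}{d_n},x_0+\tfrac{v}{d_n}\right)$ converges, together with its first two derivatives and uniformly for $x_0\in L_n^{[-1]}([a,b])$, to $\mathbb S(u-v)=\frac{\sin\pi(u-v)}{\pi(u-v)}$; this is the familiar statement that the rescaled ensemble is, locally, the sine-kernel Gaussian process, whose Kac--Rice zero density is $\tfrac{1}{\pi}\sqrt{-\mathbb S''(0)}=\tfrac1{\sqrt3}$. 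Since $\partial_u\partial_v\phi_n(0,0)=d_n^{-2}\,\partial_x\partial_y r_n(x_0,x_0)$ and $-\mathbb S''(0)=\pi^2/3$, the identity above gives $\frac{AC-B^2}{A^2}(x_0)=\frac{\pi^2}{3}\,d_n(x_0)^2(1+o(1))$. Feeding in the Christoffel-function asymptotics $d_n(x_0)=W^2(x_0)K_{n+1}(x_0,x_0)=\sigma_{n+1}(x_0)(1+o(1))$ in the bulk (consistent with $\int W^2(x)K_{n+1}(x,x)\,dx=n+1=\int\sigma_{n+1}$) yields
\begin{equation*}
\frac{1}{\pi}\frac{\sqrt{A(x)C(x)-B^2(x)}}{A(x)}=\frac{1+o(1)}{\sqrt3}\,\sigma_{n+1}(x),
\end{equation*}
uniformly on $L_n^{[-1]}([a,b])$. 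Integrating gives $\E[N_n^*([a,b])]=\frac{1+o(1)}{\sqrt3}\int_{L_n^{[-1]}(a)}^{L_n^{[-1]}(b)}\sigma_{n+1}(x)\,dx$; replacing the limits $L_n^{[-1]}$ by $L_{n+1}^{[-1]}$ costs only $o(n)$ because $a_{n\pm1}/a_n\to1$ and $(\beta_{n+1}-\beta_n)/\delta_n\to0$ while $\sigma_{n+1}\sim n/\delta_n$ in the bulk, and the resulting integral equals $(n+1)\int_a^b\sigma_{n+1}^*(y)\,dy$ by definition of $\sigma_{n+1}^*$. Dividing by $n$ and using $n/(n+1)\to1$ gives the stated asymptotic.

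I expect the main obstacle to be the differentiated universality of the third step: upgrading the scalar limit for the reproducing kernel to $C^2$ convergence of the scaled correlation, uniformly across the entire compact bulk interval, so that passing $\partial_u\partial_v$ through the limit to produce $-\mathbb S''(0)=\pi^2/3$ is legitimate. Because $W=e^{-Q}$ is only $C^2$ off the origin (not analytic), one cannot simply appeal to complex normal families; instead one must bound the second-order difference quotients of the kernel directly, using the quantitative bounds on $K_{n+1}$ and on $Q',Q''$ encoded in $\mathcal F(C^2)$, and verify that every error term is uniform in $x_0$ over $L_n^{[-1]}([a,b])$. By comparison, the Christoffel asymptotic $W^2K_{n+1}(x,x)\sim\sigma_{n+1}$ from \cite{LL1} and the $n$-versus-$(n+1)$ bookkeeping are routine once the differentiated universality is available.
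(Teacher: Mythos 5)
Your overall strategy coincides with the paper's: reduce to the Kac--Rice formula of Proposition~\ref{prop1.1}, feed in Levin--Lubinsky bulk asymptotics for the kernels, identify the resulting density with $\pi\sigma_{n+1}/\sqrt{3}$, and finish with the $L_n$-versus-$L_{n+1}$ bookkeeping; your handling of the scaling step and of the Christoffel asymptotics $W^2(x)K_{n+1}(x,x)=\sigma_{n+1}(x)(1+o(1))$ matches what the paper does. Your correlation-function identity
\[
\frac{A(x)C(x)-B^2(x)}{A^2(x)}=\partial_x\partial_y r_n(x,y)\Big|_{y=x}
\]
is correct (using $B=A'/2$ and $\partial_x r_n=\partial_y r_n=0$ on the diagonal), and it is a genuinely nice repackaging: it makes transparent why the drift $Q'$ must drop out of the final answer.

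Nevertheless, there is a genuine gap, and it sits exactly where you flag it. Your argument takes as input that the scaled correlation $\phi_n(u,v)$ converges to $\sin\pi(u-v)/(\pi(u-v))$ \emph{together with its first two derivatives}, uniformly over the bulk, and you cite \cite{LL2}, \cite{LL3} for this. Those papers do not state this off-diagonal $C^2$ form for $\mathcal{F}(C^2)$ weights; as you yourself observe, the weight is only $C^2$, so one cannot upgrade the real-variable universality limit to convergence of derivatives by complex-analytic (normal families/Cauchy estimate) means, and your proposal offers only a sketch of a replacement. What the literature does provide---and what the paper's proof actually rests on---is Theorem 1.6 of \cite{LL2}: the \emph{diagonal} asymptotics for $W^2(x)K_{n+1}^{(r,s)}(x,x)/(\sigma_{n+1}(x))^{r+s+1}$ with $r+s\le 2$, uniform on $J_{n+1}(\eps)$, including explicit drift terms $Q'/\sigma_{n+1}$. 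Note also that your clean identity does not let you bypass the drift if you try to derive your input from that diagonal form: the $o(1)$ errors in Theorem 1.6 multiply powers of $Q'(x)/\sigma_{n+1}(x)$, so after forming $C/A-(B/A)^2$ you are still left with error terms of the form $o(1)\cdot\bigl(Q'/\sigma_{n+1}\bigr)$ and $o(1)\cdot\bigl(Q'/\sigma_{n+1}\bigr)^2$, and you need the uniform bound $\abs{Q'(x)/\sigma_{n+1}(x)}\le C$ on $J_{n+1}(\eps)$ (Lemma 5.1 of \cite{LL2}) to conclude --- precisely the step the paper inserts. Replacing your asserted ``differentiated universality'' by the citation of Theorem 1.6 of \cite{LL2} together with that drift bound makes your proof complete; as written, the central analytic input is assumed rather than proved.
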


\begin{proof}
The strategy is to apply Theorem 1.6 of \cite{LL2}. It states that for all $r,s\geq 0,$ and any $\varepsilon \in (0, 1)$, we have uniformly for $x \in J_n(\varepsilon)$ as $n \to \infty$,
\begin{equation*}
\frac{W^2(x)K_n^{(r,s)}(x,x)}{(\sigma_n(x))^{r+s+1}}=\sum_{j=0}^r \left( \begin{array}{c} r \\ j \end{array}  \right) \sum_{k=0}^s \left( \begin{array}{c} s \\ k \end{array}  \right)\tau_{j,k}\pi^{j+k}\left(\frac{Q'(x)}{\sigma_n(x)}\right)^{r+s-j-k}+o(1),
\end{equation*}
where
\begin{equation*}
\tau _{j,k}=\left\{
\begin{tabular}{ll}
$0,$ & $j+k$ odd, \\
$(-1)^{(j-k)/2}\frac{1}{j+k+1},$ & $j+k$ even.
\end{tabular}
\right.
\end{equation*}
In particular, uniformly in $x \in J_{n+1}(\varepsilon)$,
\[
\frac{W^2(x)K_{n+1}^{\left(0,0\right)}\left(x,x\right)}{\sigma_{n+1}(x)}=1+o(1),\] \[\frac{W^2(x)K_{n+1}^{\left(0,1\right)}\left(x,x\right)}{(\sigma_{n+1}(x))^{2}}=\frac{Q'(x)}{\sigma_{n+1}(x)}+o(1),\]
and
\[\frac{W^2(x)K_{n+1}^{\left(1,1\right)}\left(x,x\right)}{(\sigma_{n+1}(x))^{3}}=\left(\frac{Q'(x)}{\sigma_{n+1}(x)}\right)^2+\frac{\pi^2}{3}+o(1).
\]
Next, from Proposition 1.1, for any closed interval $[l, q] \subset J_{n+1}(\varepsilon)$ (where $l, q$ may depend on $n$),
\begin{equation*}
\frac{1}{n} \E\left[N_{n}\left([l, q]\right)\right] = \frac{1}{n\pi }\int_l^q \sqrt{\frac{K_{n+1}^{\left( 1,1\right) }\left( x,x\right) }{K_{n+1}^{\left( 0,0\right) }\left( x,x\right)}-\left( \frac{K_{n+1}^{\left(0,1\right) }\left(x,x\right)}{K_{n+1}^{\left( 0,0\right) }\left( x,x\right) }\right) ^{2}}\, dx.
\end{equation*}
Substituting the above asymptotics, and cancelling, yields
\begin{align*}
&\frac{1}{n}\E\left[ N_{n}([l, q])\right]  \\
&=\frac{1}{n\pi } \int_l^q \sigma_{n+1}(x) \sqrt{\frac{\pi^2}{3}+\left(\frac{Q'(x)}{\sigma_{n+1}(x)}\right)^2 o(1) + \frac{Q'(x)}{\sigma_{n+1}(x)}o(1)+o(1)}\, dx \quad\mbox{as } n\to\infty.
\end{align*}
We note that \cite[p. 87, Lemma 5.1(a),(d)]{LL2} uniformly for $x\in J_{n+1}(\varepsilon)$,
\[
\sigma_{n+1}(x)\geq C_1\frac{n+1}{\delta_{n+1}}
\]
and
\[
\abs{Q'(x)}\leq C_2\frac{n+1}{\rho_{n+1}(x)},
\]
so that
\[
\abs{\frac{Q'(x)}{\sigma_{n+1}(x)}}\leq C_3 \frac{\delta_{n+1}}{\rho_{n+1}(x)}\leq \frac{C_3}{\sqrt{\eps(2-\eps)}},\quad x\in J_{n+1}(\varepsilon).
\]
Thus, uniformly for all intervals $[l, q]\subset J_{n+1}(\varepsilon)$, as $n\to\infty$,
\begin{align*}
\frac{1}{n}\E\left[ N_{n}([l, q])\right]
=\frac{1}{n\pi } \int_l^q \sigma_{n+1}(x) \sqrt{\frac{\pi^2}{3}+ o(1)}\, dx=\frac{1+o(1)}{n\sqrt{3}}\int_l^q \sigma_{n+1}(x) \, dx.
\end{align*}
Note that the number $N_{n}(E)$ of real zeros of $P_n(x)$ in $E$ equals the number $N_{n}^*(E^*)$ of real zeros of $P_n^*(s)$ in $E^*:=L_n(E)=\{L_n(x): x\in E\}$, since $L_n$ is a bijection. We recall that $a_n$ is increasing to $+\infty$ and $a_{-n}$ is decreasing to $-\infty$ as $n\to\infty.$ It is also known that
\begin{align*}
\lim_{n\to\infty}\frac{a_{n+1}}{a_n}=1, \quad \lim_{n\to\infty}\frac{a_{-(n+1)}}{a_{-n}}=1 \quad \mbox{and} \quad \lim_{n\to\infty}\frac{\delta_{n+1}}{\delta_n}=1,
\end{align*}
see Lemma 3.11(a) of \cite[p. 81]{LL1}. Hence we have
\begin{align} \label{bijection}
L_{n+1}\left(L_n^{[-1]}(s)\right) = L_{n+1}(\beta_n + \delta_n s) = \frac{\delta_n}{\delta_{n+1}} s + \frac{\beta_n-\beta_{n+1}}{\delta_{n+1}} \to s \quad\mbox{as } n\to\infty,
\end{align}
uniformly for $s$ in compact subsets of $\R.$ If $[a, b]\subset (-1, 1)$, then for large $n\in\N$,
\[
L_n^{[-1]}([a, b])=[a_{-n}+\delta_n(1+a), a_n-\delta_n(1-b)]\subset J_{n+1}(\varepsilon),
\]
provided $0<\varepsilon<\min \{1+a, 1-b \}$. It follows that
\begin{align*}
\frac{1}{n}\E\left[ N_{n}^*([a,b])\right]&=\frac{1}{n}\E\left[ N_{n}\left(L_n^{[-1]}([a,b])\right)\right]\\
&=\frac{1+o(1)}{(n+1)\sqrt{3}} \int_{L_n^{[-1]}(a)}^{L_n^{[-1]}(b)} \sigma_{n+1}(x)\,dx\\
&=\frac{1+o(1)}{\sqrt{3}} \int_{L_{n+1}\left(L_n^{[-1]}(a)\right)}^{L_{n+1}\left(L_n^{[-1]}(b)\right)} \sigma_{n+1}^*(s)\,ds \quad (\mbox{where } s=L_{n+1}(x))\\ &=\frac{1+o(1)}{\sqrt{3}} \int_a^b \sigma_{n+1}^*(s)\,ds \quad\mbox{as } n\to\infty,
\end{align*}
where we used \eqref{bijection} on the last step, and also that
\[
\sigma_{n+1}^*(s) \le \frac{C}{\sqrt{1-s^2}}, \quad s\in(-1,1),
\]
by Theorem 1.11(V) of \cite[p. 18]{LL1}.
\end{proof}

\begin{lemma} \label{lem3.2}
Let $W=e^{-Q}\in \mathcal{F} (C^2)$, where $Q$ is even. Let $\alpha\in(1, \infty]$. If the function $T$ in the definition of $\mathcal{F} (C^2)$ satisfies
\begin{equation*}
\lim_{x\rightarrow \infty}T(x)=\alpha,
\end{equation*}
then
\[
\lim_{n\rightarrow \infty} \sigma_n^*(x)=\mu_{\alpha}^{'}(x),\quad x\in (-1,1)\setminus\{0\}.
\]

\end{lemma}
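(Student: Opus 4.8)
The plan is to start from the explicit formula for $\sigma_n$, specialize to even $Q$ (so $a_{-n}=-a_n$, $\beta_n=0$, $\delta_n=a_n$), and reduce everything to a ratio of two one–dimensional integrals. Fixing $\xi\in(0,1)$ (the case $\xi\in(-1,0)$ follows by evenness) and substituting $x=a_n\xi$, $s=a_nt$ in the defining integral for $\sigma_n$, I would obtain
\[
\sigma_n^*(\xi)=\frac{a_nQ'(a_n)}{n}\,\frac{\sqrt{1-\xi^2}}{\pi^2}\int_{-1}^1\frac{q_n(t)-q_n(\xi)}{t-\xi}\frac{dt}{\sqrt{1-t^2}},\qquad q_n(t):=\frac{Q'(a_nt)}{Q'(a_n)}.
\]
Using that $Q'$ is odd I would fold the integral onto $[0,1]$ (the two symmetric fractions combine to the kernel $\frac{h_n(t)-h_n(\xi)}{t^2-\xi^2}$ with $h_n(t):=t\,q_n(t)$), and using \eqref{mrsnumber} in the form $\frac{a_nQ'(a_n)}{n}=\frac{\pi}{2}\big/\int_0^1\frac{h_n(t)}{\sqrt{1-t^2}}\,dt$, I would arrive at the clean identity
\[
\sigma_n^*(\xi)=\frac{\sqrt{1-\xi^2}}{\pi}\,\frac{\displaystyle\int_0^1\frac{h_n(t)-h_n(\xi)}{t^2-\xi^2}\frac{dt}{\sqrt{1-t^2}}}{\displaystyle\int_0^1\frac{h_n(t)}{\sqrt{1-t^2}}\,dt}=:\frac{\sqrt{1-\xi^2}}{\pi}\,\frac{N_n}{D_n}.
\]

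The second ingredient is the asymptotics of $q_n$. From $T(x)=xQ'(x)/Q(x)\to\alpha$ I would first show that $Q$ is regularly varying of index $\alpha$ at infinity, since $\log\frac{Q(\lambda x)}{Q(x)}=\int_x^{\lambda x}\frac{T(s)}{s}\,ds\to\alpha\log\lambda$; writing $Q'(x)=T(x)Q(x)/x$ then yields $\frac{Q'(\lambda x)}{Q'(x)}\to\lambda^{\alpha-1}$, so for fixed $t>0$ one gets $q_n(t)\to t^{\alpha-1}$ and $h_n(t)\to t^\alpha$ as $a_n\to\infty$. Monotonicity of $Q'$ gives the uniform bound $0\le h_n(t)\le t\le 1$. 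For finite $\alpha$ this is essentially all that is needed: dominated convergence handles $D_n$ (limit $\int_0^1 t^\alpha(1-t^2)^{-1/2}\,dt=\frac{\pi}{2}B_\alpha$), and, after controlling the removable singularity at $t=\xi$ by the uniform derivative bound coming from condition (e), also $N_n$; comparing the resulting limit with \eqref{alternative} gives $\sigma_n^*(\xi)\to\mu_\alpha'(\xi)$.

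The case $\alpha=\infty$ is where I expect the genuine difficulty. Here $T\to\infty$ forces $h_n(t)\to0$ for every $t\in(0,1)$ while $h_n(1)=1$, so both $N_n$ and $D_n$ tend to $0$ and the ratio is a true $\tfrac00$ indeterminacy; the mass of the probability measure with density proportional to $h_n(t)(1-t^2)^{-1/2}$ on $[0,1]$ migrates to the endpoint $t=1$. The plan is to prove that these normalized measures converge weakly to $\delta_1$, which requires a lower bound on $D_n$ driven by the thin boundary layer just below $t=1$ that beats the (typically exponentially) small contribution of any fixed $[0,c]$. Granting this, together with uniform integrability near $t=\xi$ (the removable singularity stays harmless because $\xi$ is bounded away from the concentration point $t=1$), the factor $(t^2-\xi^2)^{-1}$ in $N_n$ may be replaced by its value $\frac{1}{1-\xi^2}$ at $t=1$, giving $N_n/D_n\to\frac{1}{1-\xi^2}$ and hence
\[
\sigma_n^*(\xi)\to\frac{\sqrt{1-\xi^2}}{\pi(1-\xi^2)}=\frac{1}{\pi\sqrt{1-\xi^2}}=\mu_\infty'(\xi).
\]
The main obstacle throughout is this $\alpha=\infty$ concentration estimate, since it forces one to quantify the rapid variation of $Q'$ through conditions (d)–(e) rather than merely pass to pointwise limits of $h_n$; the uniform-in-$n$ control of the Cauchy-type singularity at $t=\xi$ is the secondary technical point shared by both regimes.
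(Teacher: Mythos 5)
Your reduction to $\sigma_n^*(\xi)=\frac{\sqrt{1-\xi^2}}{\pi}\,N_n/D_n$ is exactly the paper's starting formula (the paper keeps the factor $n$ in the denominator rather than normalizing by $D_n$, but via \eqref{mrsnumber} it is the same identity), your regular-variation derivation of $Q'(xt)/Q'(x)\to t^{\alpha-1}$ is precisely the paper's Remark \ref{rem3.3}, and your finite-$\alpha$ argument coincides with the paper's proof: there $T$ is bounded above (quasi-increasing plus $T\to\alpha<\infty$), condition (e) then gives $0\le Q''(y)/Q'(y)\le C/y$, hence $\abs{h_n'}\le C$ uniformly, which tames the Cauchy kernel at $t=\xi$ and lets dominated convergence finish via \eqref{alternative}. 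That half is correct and follows the same route.

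The genuine gap is the $\alpha=\infty$ case, which you leave as a plan (``the plan is to prove\dots granting this\dots''). The concentration of the measures $d\nu_n\propto h_n(t)(1-t^2)^{-1/2}\,dt$ at $t=1$ does \emph{not} follow from the pointwise limits $h_n(t)\to0$ for $t<1$, $h_n(1)=1$, together with monotonicity of $h_n$: what is needed is $h_n(r)/h_n(s)\to0$ for fixed $r<s<1$, and pointwise decay of numerator and denominator separately says nothing about their ratio. The paper gets this quantitatively from condition (d): since $uQ'(u)=T(u)Q(u)$ and $\log\bigl(Q(y)/Q(x)\bigr)=\int_x^y T(u)\,du/u\ge c\,T(x)\log(y/x)$ by the quasi-increasing property (this is the lower bound in (3.5) of \cite[p. 64]{LL1}), one obtains
\[
\frac{h_n(t)}{h_n(s)}=\frac{a_ntQ'(a_nt)}{a_nsQ'(a_ns)}\le C\left(\frac{r}{s}\right)^{c\,T(a_nr)}\to0,\qquad t\in[0,r],
\]
exponentially fast because $T(a_nr)\to\infty$; this is the estimate your plan presupposes. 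Moreover, your remark that the singularity at $t=\xi$ ``stays harmless'' is not free in this regime: the natural bound on $h_n'(u)$ is of size $q_n(u)\bigl(1+C_2T(a_nu)\bigr)$ with $T(a_nu)\to\infty$, so one must actually prove $\sup_{u\in[0,s]}a_nQ'(a_nu)T(a_nu)/n\to0$. This is the paper's $I_1$ estimate, which again uses quasi-increasing $T$, the inequality (3.5) of \cite{LL1}, and the elementary bound $\sup_{y\ge0}(s/\rho)^{\max\{\Lambda,Cy\}-1}y<\infty$, together with the fact that $a_n\tau Q'(a_n\tau)/n\to0$ for fixed $\tau\in(0,1)$. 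Until these two quantitative rapid-variation estimates are supplied, the $\alpha=\infty$ half of the lemma remains unproven.
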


\begin{remark} \label{rem3.3}
An equivalent form of
\[
\lim_{x\rightarrow \infty}T(x)=\alpha\in(1, \infty)
\]
is that uniformly for $t$ in compact subsets of $(0,1],$
\begin{equation}\label{equiv}
\lim_{x\rightarrow \infty}\frac{Q'(xt)}{Q'(x)}=t^{\alpha-1}.
\end{equation}
Indeed, if this last condition holds, then as $x\rightarrow \infty$,
\begin{align*}
T(x)^{-1}&=\frac{Q(x)}{xQ'(x)}=\frac{1}{xQ'(x)}\int_0^{x}Q'(u)\, du\\
&=\int_0^1\frac{Q'(xt)}{Q'(x)}\, dt\rightarrow \int_0^1 t^{\alpha-1}\, dt=\frac{1}{\alpha}.
\end{align*}
Here we also used $0\leq Q'(xt)/Q'(x)\leq 1$ and dominated convergence. In the other direction, as $x\rightarrow \infty$,
\begin{align*}
\frac{Q'(xt)}{Q'(x)}&=\frac{T(xt)}{T(x)}\frac{Q(xt)}{tQ(x)} = \frac{T(xt)}{tT(x)}\exp\left(-\int_{xt}^{x} \frac{Q'(u)}{Q(u)}\,du\right)\\
&=\frac{T(xt)}{tT(x)}\exp\left(-\int_{xt}^{x} \frac{T(u)}{u}\,du\right)\\
&=\frac{T(xt)}{tT(x)}\exp\left(-\int_{xt}^{x} \frac{\alpha+o(1)}{u}\,du\right)\\
&=\frac{1+o(1)}{t}\exp\left(-(\alpha+o(1))\log \frac{1}{t}\right)=t^{\alpha-1}(1+o(1)).
\end{align*}
Given any $\eps\in(0,1),$ this holds uniformly for $t\in [\eps,1].$
\end{remark}

\begin{proof}[Proof of Lemma 3.2]
We prove the case $1 <\alpha <\infty$ first:\\
From (\ref{mrsnumber}), as $n\rightarrow \infty$,
\begin{align}
\frac{n}{a_nQ'(a_n)}&=\frac{2}{\pi}\int_0^1\frac{tQ'(a_nt)}{Q'(a_n)\sqrt{1-t^2}}\, dt\nonumber\\
&\rightarrow \frac{2}{\pi}\int_0^1 \frac{t^{\alpha}}{\sqrt{1-t^2}}\, dt=B_{\alpha}. \label{3.2.0}
\end{align}
Indeed, the integrand converges pointwise, and because $Q$ is convex, so $Q'(a_nt)/Q'(a_n)\leq 1$, and we can apply Lebesgue's Dominated Convergence Theorem. In particular, for $n \geq 1$, and some $C_1>1$ independent of $n$,
\begin{equation}\label{3.2.7}
C_1^{-1}n\leq a_nQ'(a_n)\leq C_1n.
\end{equation}
Next, we know that for $x\in (0,1)$,
\[
\sigma_n^*(x)=\frac{2\sqrt{1-x^2}}{\pi^2}\int_0^1\frac{a_ntQ'(a_nt)-a_nxQ'(a_nx)}{n(t^2-x^2)}\frac{dt}{\sqrt{1-t^2}}.
\]
For $t\in (0,1)\setminus \{x\}$, we obtain from \eqref{equiv} and \eqref{3.2.0} that
\begin{align*}
&\lim_{n\rightarrow \infty}\frac{a_ntQ'(a_nt)-a_nxQ'(a_nx)}{n(t^2-x^2)}\\
&=B_{\alpha}^{-1}\lim_{n\rightarrow \infty}\frac{a_ntQ'(a_nt)-a_nxQ'(a_nx)}{a_nQ'(a_n)(t^2-x^2)}\\
&=B_{\alpha}^{-1}\frac{t^{\alpha}-x^{\alpha}}{t^2-x^2}.
\end{align*}
We need a bound on the integrand so as to apply dominated convergence. First, $T(u)$ is bounded above. Next, for some $\xi$ between $t$ and $x$,
\begin{align*}
&\abs{\frac{a_ntQ'(a_nt)-a_nxQ'(a_nx)}{n(t^2-x^2)}}\\
&=\abs{\frac{\frac{d}{du}(a_nuQ'(a_nu))\vert_{u=\xi}}{n(t+x)}}\\
&\leq \frac{a_nQ'(a_n\xi)+a_n^2\xi Q''(a_n\xi)}{n(t+x)}.
\end{align*}
Here (\ref{3.2.7}) gives (since $Q'$ is increasing)
\[
\frac{a_nQ'(a_n\xi)}{n(t+x)}
\leq \frac{a_nQ'(a_n)}{n(t+x)}\leq\frac{C_1}{x}.
\]
By definition of $\mathcal{F}(C^2)$ and boundedness of $T$, we have
\[
0\leq \frac{Q''(y)}{Q'(y)}\leq \frac{C_2 T(y)}{y}\leq \frac{C_3}{y},\quad y>0,
\]
so that
\[
\frac{a_n^2\xi Q''(a_n\xi)}{n(t+x)}\leq C_3\frac{a_n^2\xi Q'(a_n\xi)}{a_n\xi n(t+x)}\leq C_3\frac{a_nQ'(a_n\xi)}{n(t+x)}\leq \frac{C_4}{x}.
\]
Thus, for all $t \in (0, 1)$,
\[
\abs{\frac{a_ntQ'(a_nt)-a_nxQ'(a_nx)}{n(t^2-x^2)}}\leq \frac{C_5}{x},
\]
and we can apply dominated convergence to deduce that
\[
\lim_{n\rightarrow \infty}\sigma_n^*(x)=\frac{2\sqrt{1-x^2}}{\pi^2 B_{\alpha}}\int_0^1\frac{t^\alpha-x^\alpha}{t^2-x^2}\frac{dt}{\sqrt{1-t^2}}=\mu_{\alpha}^{'}(x).
\]
Next, we deal with the case $\alpha=\infty$:\\
Let $0<r<s<1$. We consider $x\in (0, r]$ and split
\begin{align}
\sigma_n^*(x)&=\frac{2\sqrt{1-x^2}}{\pi^2}\left(\int_0^s+\int_s^1\right)\frac{a_ntQ'(a_nt)-a_nxQ'(a_nx)}{n(t^2-x^2)}\frac{dt}{\sqrt{1-t^2}}\nonumber \\
&=:I_1+I_2. \label{3.2.11}
\end{align}
We shall show that the main contribution to $\sigma_n^*$ comes from $I_2$. Since the integrand in the integral defining $\sigma_n^*$ is nonnegative, we have for $x\in (0, r]$ that
\begin{align}
I_2&=\frac{2\sqrt{1-x^2}}{\pi^2}\int_s^1\frac{a_ntQ'(a_nt)-a_nxQ'(a_nx)}{n(t^2-x^2)}\frac{dt}{\sqrt{1-t^2}}\nonumber\\
&\leq \frac{2\sqrt{1-x^2}}{\pi^2}\int_s^1\frac{a_ntQ'(a_nt)}{n(t^2-x^2)}\frac{dt}{\sqrt{1-t^2}}\nonumber\\
&\leq  \frac{2\sqrt{1-x^2}}{\pi^2(s^2-x^2)n}\int_s^1 a_ntQ'(a_nt)\frac{dt}{\sqrt{1-t^2}}\nonumber\\
&\leq \frac{\sqrt{1-x^2}}{\pi(s^2-x^2)n} \frac{2}{\pi}\int_0^1 a_ntQ'(a_nt)\frac{dt}{\sqrt{1-t^2}}\nonumber\\
&=\frac{\sqrt{1-x^2}}{\pi(s^2-x^2)}.\label{3.2.12}
\end{align}
Next, note that by the lower bound in (3.5) of \cite[p. 64]{LL1}, for $t\in[0, r]$,
\begin{align*}
0&\leq \frac{a_ntQ'(a_nt)}{a_nsQ'(a_ns)}\leq \frac{a_nrQ'(a_nr)}{a_nsQ'(a_ns)}\leq \frac{T(a_nr)}{T(a_ns)}\left(\frac{r}{s} \right)^{\max \{\Lambda, C_6 T(a_nr)\} }\\
&\leq C_7 \left(\frac{r}{s} \right)^{C_8 T(a_nr)},
\end{align*}
since $T$ is quasi-increasing. Our hypothesis
\[
\lim_{x\rightarrow \infty}T(x)=\infty
\]
gives
\begin{equation}\label{3.2.13}
\lim_{n\rightarrow \infty} \max_{t\in [0, r]} \frac{a_ntQ'(a_nt)}{a_nsQ'(a_ns)}=0.
\end{equation}
It also then follows easily from (\ref{mrsnumber}) that for each fixed $\tau \in (0, 1)$,
\begin{equation} \label{3.2.14}
\lim_{n\rightarrow \infty} \frac{a_n\tau Q'(a_n\tau)}{n}=0.
\end{equation}
Now uniformly for $x\in [0, r]$,
\begin{align} \label{3.2.15}
I_2&\geq \frac{2\sqrt{1-x^2}}{\pi^2(1-x^2)}\int_s^1 \frac{a_ntQ'(a_nt)-a_nxQ'(a_nx)}{n}\frac{dt}{\sqrt{1-t^2}}\nonumber\\
&\geq \frac{1}{\pi \sqrt{1-x^2}}\frac{2}{\pi n}\int_s^1 a_ntQ'(a_nt)(1+o(1))\frac{dt}{\sqrt{1-t^2}}\nonumber\\
&=\frac{1+o(1)}{\pi \sqrt{1-x^2}}\frac{2}{\pi n}\int_0^1 a_ntQ'(a_nt)\frac{dt}{\sqrt{1-t^2}} \nonumber\\
&=\frac{1+o(1)}{\pi \sqrt{1-x^2}} \quad\mbox{as } n\to\infty,
\end{align}
by (\ref{mrsnumber}) and using (\ref{3.2.13}). Now we deal with $I_1$ - it clearly suffices to show only an upper bound. Let $s<\rho<1$. By definition of the class $\mathcal{F}(C^2)$ and (\ref{3.2.14}), we have that
\begin{align*}
I_1&=\frac{2\sqrt{1-x^2}}{\pi^2}\int_0^s \frac{a_ntQ'(a_nt)-a_nxQ'(a_nx)}{n(t^2-x^2)}\frac{dt}{\sqrt{1-t^2}}\\
&\leq \frac{2\sqrt{1-x^2}}{\pi^2 nx} \max_{u\in [0, s]} \abs{\frac{d}{du}(a_nuQ'(a_nu))}\int_0^s\frac{dt}{\sqrt{1-t^2}}\\
&\leq \frac{C_9}{nx}[a_nQ'(a_ns)+\max_{u\in [0, s]} a_n^2uQ''(a_nu)]\\
&\leq o(1)+\frac{C_9}{nx}\max_{u\in [0, s]} a_nQ'(a_nu)T(a_nu) \quad\mbox{as } n\to\infty.
\end{align*}
Using the fact that $T$ is quasi-increasing and the lower bound in (3.5) of \cite[p. 64]{LL1}, we continue this as
\begin{align*}
I_1 &\leq o(1)+\frac{C_9}{nx} a_nQ'(a_ns)T(a_ns)\\
&\leq o(1)+\frac{C_9}{nx} a_nQ'(a_n\rho)\frac{T(a_ns)}{T(a_n\rho)}\left(\frac{s}{\rho}\right)^{\max \{\Lambda, C_6 T(a_ns)\}-1}T(a_ns)\\
&\leq o(1)+\frac{C_9}{nx} a_nQ'(a_n\rho)\sup_{y\in [0, \infty)}\left(\frac{s}{\rho}\right)^{\max \{\Lambda, C_6 y\}-1} y=o(1)\ \mbox{as } n\to\infty,
\end{align*}
by (\ref{3.2.14}) and as $s/\rho <1$. Together with the fact that $I_1 \geq 0$, and using (\ref{3.2.11}), (\ref{3.2.12}), (\ref{3.2.15}), we have shown that for $x\in (0, r]$,
\[
\frac{1}{\pi \sqrt{1-x^2}}\leq \liminf_{n\rightarrow \infty} \sigma_n^*(x)\leq \limsup_{n\rightarrow \infty} \sigma_n^*(x)\leq \frac{\sqrt{1-x^2}}{\pi (s^2-x^2)}.
\]
As $s$ is independent of $r$, we can let $s\rightarrow 1-$ to deduce that for $x\in (0, r]$,
\[
\lim_{n\rightarrow \infty} \sigma_n^*(x)=\frac{1}{\pi \sqrt{1-x^2}}=\mu_{\infty}^{'}(x).
\]
\end{proof}

\begin{proof}[Proof of Theorem \ref{thm2.3}]
We know from Theorem \ref{thm3.1} that
\[
\frac{1}{n}\E\left[N_n^*\left([a,b]\right)\right] = \frac{1+o(1)}{\sqrt{3}} \int_a^b \sigma_{n+1}^*(y)\, dy.
\]
Lemma \ref{lem3.2} gives for $1<\alpha \leq \infty$ that
\[
\lim_{n\rightarrow \infty} \sigma_{n+1}^*(y)=\mu_{\alpha}^{'}(y),\quad y\in (-1,1)\setminus\{0\}.
\]
Next, by Theorem 1.11(V) of \cite[p. 18]{LL1},
\[
\sigma_{n+1}^*(s) \le \frac{C}{\sqrt{1-s^2}}, \quad s\in(-1,1).
\]
Lebesgue's Dominated Convergence Theorem now implies that
\begin{align*}
\lim_{n\rightarrow \infty}\frac{1}{n}\E\left[N_n^*\left([a,b]\right)\right] &= \frac{1}{\sqrt{3}} \int_a^b \lim_{n\rightarrow \infty} \sigma_{n+1}^*(y)\, dy=\frac{1}{\sqrt{3}}\mu_\alpha([a, b]).
\end{align*}
\end{proof}

\begin{lemma}\label{lem3.4}
 If $W=e^{-Q}\in \mathcal{F} (C^2)$ then
\[\lim_{n\to\infty}a_n^{1/n} = 1.\]
\end{lemma}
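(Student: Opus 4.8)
The plan is to prove $\log a_n = o(n)$, which is equivalent to the claim since $a_n^{1/n} = e^{(\log a_n)/n}$. The lower bound is immediate: condition (c) forces $a_n \to +\infty$, so $a_n \ge 1$ for all large $n$ and hence $\liminf_{n\to\infty} a_n^{1/n} \ge 1$. All the real content lies in the matching upper bound $\limsup_{n\to\infty} a_n^{1/n} \le 1$, which amounts to showing that $a_n$ grows at most polynomially in $n$.

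The key is to compare $n$ with $Q(a_n)$. First I would exploit condition (d), namely $T(t) = tQ'(t)/Q(t) \ge \Lambda > 1$ on $(0,\infty)$. Writing $\frac{d}{dt}\log Q(t) = Q'(t)/Q(t) = T(t)/t \ge \Lambda/t$ and integrating yields two consequences: a global growth lower bound $Q(t) \ge c\,t^{\Lambda}$ for $t$ bounded away from $0$, and, integrating from $a_n/2$ to $a_n$, the ratio estimate $Q(a_n/2) \le 2^{-\Lambda} Q(a_n)$. Since $\Lambda > 1$ we have $1 - 2^{-\Lambda} > 0$, so the latter gives $Q(a_n) - Q(a_n/2) \ge (1 - 2^{-\Lambda})\,Q(a_n)$.

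Next I would convert the defining relation \eqref{mrsnumber} into an upper bound on $Q(a_n)$. Substituting $u = a_n t$ in \eqref{mrsnumber} gives
\[
n = \frac{2}{\pi}\int_0^{a_n} \frac{u\,Q'(u)}{\sqrt{a_n^2 - u^2}}\, du.
\]
Restricting the integral to $u \in [a_n/2,\, a_n]$, where $\frac{u}{\sqrt{a_n^2 - u^2}} \ge \tfrac{1}{2}$, and using $Q' \ge 0$ on $(0,\infty)$ produces
\[
n \ge \frac{1}{\pi}\bigl(Q(a_n) - Q(a_n/2)\bigr) \ge \frac{1 - 2^{-\Lambda}}{\pi}\, Q(a_n),
\]
so $Q(a_n) \le C n$ for a constant $C$ depending only on $\Lambda$. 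Combined with $Q(a_n) \ge c\,a_n^{\Lambda}$, this forces $a_n \le (Cn/c)^{1/\Lambda}$, whence $\frac{\log a_n}{n} \le \frac{1}{\Lambda n}\log(Cn/c) \to 0$, completing the argument.

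I expect the only point requiring genuine care to be the recognition that one must bound $n$ \emph{from below} by $Q(a_n)$ rather than by $a_n Q'(a_n)$. The naive estimate $Q'(a_n t) \le Q'(a_n)$ only yields $n \le \frac{2}{\pi} a_n Q'(a_n)$, which points the wrong way, and since $T$ may be unbounded above within $\mathcal{F}(C^2)$ one cannot control $a_n Q'(a_n) = T(a_n)Q(a_n)$ from above by $n$ at all. Working directly with $Q(a_n)$ through the change of variables and the crude but effective lower bound on $[a_n/2, a_n]$ sidesteps this difficulty, and the rest is routine.
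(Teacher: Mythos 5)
Your core argument is correct, and it takes a genuinely different route from the paper: the paper disposes of this lemma in two lines by citing Lemma 3.5(c) of \cite[p.~72]{LL1}, which supplies the polynomial bound $1\le a_n/a_1\le Cn^{1/\Lambda}$, whereas you derive such a bound from scratch. Your estimates check out: after the substitution $u=a_nt$ in \eqref{mrsnumber}, on $[a_n/2,a_n]$ one has $u/\sqrt{a_n^2-u^2}\ge 1/\sqrt{3}>1/2$, giving $n\ge\pi^{-1}\left(Q(a_n)-Q(a_n/2)\right)$; integrating $Q'/Q=T(t)/t\ge\Lambda/t$ gives both $Q(a_n/2)\le 2^{-\Lambda}Q(a_n)$ and $Q(t)\ge Q(1)\,t^{\Lambda}$ for $t\ge1$ (positivity of $Q$ off the origin is implicit in the definition of $T$), so $a_n=O(n^{1/\Lambda})$ and $a_n^{1/n}\to1$. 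In effect you have reproved the special case of the cited result that the paper needs; the citation buys the authors brevity and full generality, while your computation makes transparent that only $T\ge\Lambda$ and the monotonicity of $Q'$ drive the conclusion. Your closing remark is also apt: when $T$ is unbounded, $a_nQ'(a_n)=T(a_n)Q(a_n)$ really can grow faster than $n$, so bounding $Q(a_n)$ rather than $a_nQ'(a_n)$ is the right move.

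The one real caveat is scope. Lemma \ref{lem3.4} carries no evenness hypothesis, but your proof is anchored to \eqref{mrsnumber}, which the paper introduces only as the definition of $a_n$ for \emph{even} $Q$ (where $a_{-n}=-a_n$). As written, you have therefore proved the lemma only for even weights. That covers every application in this paper (Lemmas \ref{lem3.5}--\ref{lem3.7} and Theorem \ref{thm2.4} all assume $Q$ even), but to obtain the statement as formulated you should start from the general relation $n=\frac1\pi\int_{a_{-n}}^{a_n}xQ'(x)\left((x-a_{-n})(a_n-x)\right)^{-1/2}dx$, whose integrand is still nonnegative because $xQ'(x)\ge0$. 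Restricting to $[a_n/2,a_n]$ now produces an extra factor, namely $n\ge \frac{1-2^{-\Lambda}}{\pi}\sqrt{a_n/\left(2(a_n+|a_{-n}|)\right)}\,Q(a_n)$, together with the symmetric bound with $|a_{-n}|$ in place of $a_n$; splitting into the cases $|a_{-n}|\le 3a_n$ and $|a_{-n}|>3a_n$ and using whichever bound has its square-root factor bounded below then gives $\max(a_n,|a_{-n}|)=O(n^{1/\Lambda})$ exactly as in your even-case argument. A second, minor point: $a_n\to\infty$ is not a consequence of condition (c) alone but of the defining relation (if $a_n\le M$ for all $n$, then $n\le a_nQ'(a_n)\le MQ'(M)$, impossible for large $n$); alternatively, since $a_n$ is increasing, $a_n\ge a_1>0$ already yields $\liminf_{n\to\infty}a_n^{1/n}\ge1$, which is all your lower bound requires.
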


\begin{proof}
Lemma 3.5(c) of \cite[p. 72]{LL1} implies that there is a constant $C>0$ such that
\[
1\leq \frac{a_n}{a_1}\leq Cn^{1/\Lambda} \text{ for all }n\geq 1,
\]
which immediately gives the needed result.
\end{proof}

\begin{lemma}\label{lem3.5}
Let $W=e^{-Q}\in \mathcal{F} (C^2)$, where $Q$ is  even. If the coefficients $\{c_j\}_{j=0}^\infty$ of random orthogonal polynomials \eqref{2.1} are complex i.i.d. random variables such that $\E[|\log|c_0||]<\infty$, then
\[
\lim_{n\to\infty} \left\|P_n W \right\|_{L^{\infty}(\R)}^{1/n} = 1 \text{ with probability one}.
\]
\end{lemma}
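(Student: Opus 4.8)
The plan is to prove the two one-sided $n$th-root estimates
\[
\limsup_{n\to\infty}\|P_nW\|_{L^\infty(\R)}^{1/n}\le 1 \quad\text{and}\quad \liminf_{n\to\infty}\|P_nW\|_{L^\infty(\R)}^{1/n}\ge 1
\]
with probability one, and then combine them. The only probabilistic input I will need is that $|c_n|^{1/n}\to 1$ almost surely, equivalently $\frac1n\log|c_n|\to 0$ a.s. This follows from $\E[|\log|c_0||]<\infty$ by a standard Borel--Cantelli argument: for the nonnegative variable $Y=|\log|c_0||$ one has $\sum_n\P(Y>\eps n)<\infty$ for every $\eps>0$ precisely when $\E[Y]<\infty$, so $|\log|c_n||\le\eps n$ for all large $n$ a.s.; intersecting over $\eps=1/k$, $k\in\N$, gives $\frac1n\log|c_n|\to 0$ a.s. In particular $\P(c_0=0)=0$, so a.s. every $c_n\ne 0$. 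The same argument yields $\bigl(\max_{0\le j\le n}|c_j|\bigr)^{1/n}\to 1$ a.s., which is what the upper bound requires.

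For the lower bound I would exploit orthonormality in $L^2$. Since the $p_j$ are orthonormal with respect to $W^2\,dx$,
\[
\int_\R |P_nW|^2\,dx=\sum_{j=0}^n|c_j|^2\ge |c_n|^2.
\]
On the other hand, by the infinite-finite range inequality for $\mathcal F(C^2)$ weights together with the Mhaskar--Saff identity, there is a constant $C>0$ independent of $n$ such that
\[
\int_\R|P_nW|^2\,dx\le C\int_{\Delta_n}|P_nW|^2\,dx\le 2Ca_n\,\|P_nW\|_{L^\infty(\R)}^2 .
\]
Combining the two displays gives $\|P_nW\|_{L^\infty(\R)}^2\ge |c_n|^2/(2Ca_n)$, whence
\[
\|P_nW\|_{L^\infty(\R)}^{1/n}\ge \frac{|c_n|^{1/n}}{(2Ca_n)^{1/(2n)}}\longrightarrow 1
\]
almost surely, using $|c_n|^{1/n}\to 1$ and $a_n^{1/n}\to 1$ from Lemma \ref{lem3.4}.

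For the upper bound I would use the triangle inequality,
\[
\|P_nW\|_{L^\infty(\R)}\le \sum_{j=0}^n|c_j|\,\|p_jW\|_{L^\infty(\R)}\le (n+1)\Bigl(\max_{0\le j\le n}|c_j|\Bigr)\max_{0\le j\le n}\|p_jW\|_{L^\infty(\R)},
\]
together with the known bound $\|p_jW\|_{L^\infty(\R)}\le C\,a_j^{-1/2}j^{1/6}$ (or any polynomial-in-$j$ bound) valid for weights in $\mathcal F(C^2)$. Since $a_j$ is bounded below and the polynomial factors disappear under the root, taking $n$th roots and using $a_n^{1/n}\to 1$ and $(\max_{j\le n}|c_j|)^{1/n}\to 1$ yields $\limsup_n\|P_nW\|_{L^\infty(\R)}^{1/n}\le 1$ a.s. Alternatively, one can bypass pointwise bounds on $p_j$ by invoking a Nikolskii inequality $\|P_nW\|_{L^\infty(\R)}\le C(n/a_n)^{1/2}\|P_nW\|_{L^2(\R)}$ and then $\|P_nW\|_{L^2(\R)}^2=\sum_{j\le n}|c_j|^2\le (n+1)\max_{j\le n}|c_j|^2$.

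The routine parts are the probabilistic lemma and the algebra of $n$th roots; the main thing to pin down is that the two analytic inputs hold with the right uniformity. Specifically, I must make sure the range inequality used in the lower bound holds with a constant independent of $n$ (so that $a_n^{1/n}\to 1$ absorbs it, possibly after enlarging $\Delta_n$ by a fixed factor, which does not affect the limit since $a_{(1+\eta)n}^{1/n}\to 1$ as well), and that the sup-norm (or Nikolskii) bound in the upper bound grows at most polynomially in $n$ up to powers of $a_n$. Both are available for the class $\mathcal F(C^2)$ in \cite{LL1}; once cited correctly, everything collapses under the $n$th root because $a_n^{1/n}\to1$ by Lemma \ref{lem3.4} and all polynomial factors are negligible.
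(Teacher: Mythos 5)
Your proposal is correct and follows essentially the same route as the paper: orthonormality identifies $\|P_nW\|_{L^2(\R)}^2=\sum_{j=0}^n|c_j|^2$, the hypothesis $\E[|\log|c_0||]<\infty$ yields $|c_n|^{1/n}\to 1$ and $\bigl(\max_{0\le j\le n}|c_j|\bigr)^{1/n}\to 1$ almost surely (the paper cites Lemma 4.2 of \cite{Pr} for this; your Borel--Cantelli argument is a correct self-contained substitute), and one then passes between the $L^2$ and $L^\infty$ norms of $P_nW$ with constants that are polynomial in $n$, $a_n$ and $T(a_n)=O(n^2)$, hence negligible under $n$th roots by Lemma \ref{lem3.4}. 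The paper does this last step by quoting the two-sided Nikolskii inequalities of \cite[Theorem 10.3]{LL1}; your lower bound simply unpacks the proof of one of them (restricted-range inequality plus the trivial sup bound on $\Delta_n$, with your remark about enlarging the interval correctly handling the uniformity issue), your alternative upper-bound route is exactly the paper's, and your primary upper bound (triangle inequality plus a polynomially growing bound on $\|p_jW\|_{L^\infty(\R)}$, which for $\mathcal{F}(C^2)$ carries a harmless $T(a_j)$-dependent factor) is a mild variant drawing on the same source \cite{LL1}.
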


\begin{proof}
Using orthogonality, we obtain for polynomials defined in \eqref{2.1} that
\[
\int_{-\infty}^\infty |P_n(x)|^2 W^2(x)\,dx = \sum_{j=0}^n |c_j|^2.
\]
Hence
\[
\max_{0\le j\le n} |c_j| \le \left(\int_{-\infty}^\infty |P_n(x)|^2 W^2(x)\,dx \right)^{1/2} \le (n+1) \max_{0\le j\le n} |c_j|.
\]
Lemma 4.2 of \cite{Pr} (see (4.6) there) implies that
\[
\lim_{n\to\infty} \left(\int_{-\infty}^\infty |P_n(x)|^2 W^2(x)\,dx \right)^{1/(2n)} = \lim_{n\to\infty} \left( \max_{0\le j\le n} |c_j| \right)^{1/n} = 1
\]
with probability one. That is,
\begin{equation}\label{3.5.1}
\lim_{n\to\infty} \left\|P_n W \right\|_{L^{2}(\R)}^{1/n} = 1 \text{ with probability one}.
\end{equation}
We use the Nikolskii inequalities of Theorem 10.3 of \cite[p. 295]{LL1} stated as
\[
\left\|P_n W \right\|_{L^{\infty}(\R)} \leq C_1 \left(\frac{n}{a_n}\right)^{1/2}(T(a_n))^{1/4} \left\|P_n W \right\|_{L^2(\R)}
\]
and
\[
\left\|P_n W \right\|_{L^2(\R)} \leq C_2  a_n^{1/2} \left\|P_n W \right\|_{L^{\infty}(\R)}.
\]
Since $T(a_n)=O(n^2)$ by Lemma 3.7 of \cite[p. 76]{LL1}, we obtain that
\[
\frac{1}{C_2}\frac{1}{\sqrt{a_n}}\left\|P_n W \right\|_{L^2(\R)}\leq \left\|P_n W \right\|_{L^{\infty}(\R)}\leq C_3 n\left\|P_n W \right\|_{L^2(\R)},
\]
and the result follows by applying Lemma \ref{lem3.4} and (\ref{3.5.1}).
\end{proof}

\begin{lemma}\label{lem3.6}
Let $W=e^{-Q}\in \mathcal{F} (C^2)$, where $Q$ is even. If the function $T$ in the definition of $\mathcal{F} (C^2)$ satisfies
\begin{equation*}
\lim_{x\rightarrow \infty}T(x)=\infty,
\end{equation*}
then
\[
\lim_{n\to\infty} \gamma_n^{1/n} a_n = 2,
\]
where $\gamma_n$ is the leading coefficient of the orthonormal polynomial $p_n(x)$ associated with the weight $W^2.$
\end{lemma}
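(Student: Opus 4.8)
The plan is to exploit the $L^2$ extremal characterization of the leading coefficient. Writing $\pi_n=p_n/\gamma_n$ for the monic orthogonal polynomial, the normalization $\int p_n^2W^2\,dx=1$ gives
\[
\frac{1}{\gamma_n}=\|\pi_n W\|_{L^2(\R)}=\min_{\pi\ \mathrm{monic},\ \deg\pi=n}\|\pi W\|_{L^2(\R)}.
\]
So proving $\gamma_n^{1/n}a_n\to2$ amounts to showing that this minimal weighted $L^2$-norm behaves, on the $n$th-root scale, like the unweighted Chebyshev number of the Mhaskar--Rakhmanov--Saff interval $\Delta_n=[-a_n,a_n]$. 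Since the monic Chebyshev polynomial of $\Delta_n$ is extremal with sup-norm $2(a_n/2)^n$, we have $\min_{\pi\ \mathrm{monic}}\|\pi\|_{L^\infty(\Delta_n)}=2(a_n/2)^n$, and $a_n^{1/n}\to1$ (Lemma \ref{lem3.4}) gives $(2(a_n/2)^n)^{1/n}\sim a_n/2$. I would obtain matching upper and lower bounds of this order for $(1/\gamma_n)^{1/n}$ by transferring between $L^2$ and $L^\infty$ with the Nikolskii inequalities recorded in the proof of Lemma \ref{lem3.5}, and localizing to $\Delta_n$ via the Mhaskar--Saff identity.

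For $\liminf_n\gamma_n^{1/n}a_n\ge2$ I would test the extremal problem with the rescaled monic Chebyshev polynomial $\tilde T_n(x)=a_n^nT_n(x/a_n)$, where $T_n$ is the monic Chebyshev polynomial of $[-1,1]$ with $\|T_n\|_{L^\infty[-1,1]}=2^{1-n}$. Then
\[
\frac{1}{\gamma_n}\le\|\tilde T_n W\|_{L^2(\R)}\le C_2\,a_n^{1/2}\|\tilde T_n W\|_{L^\infty(\R)}=C_2\,a_n^{1/2}\|\tilde T_n W\|_{L^\infty(\Delta_n)}\le C_2\,a_n^{1/2}\|\tilde T_n\|_{L^\infty(\Delta_n)}=2C_2\,a_n^{1/2}\Big(\frac{a_n}{2}\Big)^n,
\]
using the forward Nikolskii inequality, the Mhaskar--Saff identity, and $W\le1$ (as $Q\ge0$). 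Taking $n$th roots and using $a_n^{1/n}\to1$ yields $(1/\gamma_n)^{1/n}\le(1+o(1))a_n/2$, i.e.\ $\gamma_n^{1/n}a_n\ge2-o(1)$. This half needs nothing beyond $W\in\mathcal F(C^2)$ and holds for every $\alpha$.

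For the reverse bound $\limsup_n\gamma_n^{1/n}a_n\le2$ I would run the comparison in the opposite direction. The reverse Nikolskii inequality gives
\[
\frac{1}{\gamma_n}=\|\pi_n W\|_{L^2(\R)}\ge C_1^{-1}\Big(\frac{a_n}{n}\Big)^{1/2}(T(a_n))^{-1/4}\|\pi_n W\|_{L^\infty(\R)}\ge C_1^{-1}\Big(\frac{a_n}{n}\Big)^{1/2}(T(a_n))^{-1/4}\min_{\pi\ \mathrm{monic}}\|\pi W\|_{L^\infty(\R)}.
\]
By Mhaskar--Saff the last minimum equals $\min_{\pi}\|\pi W\|_{L^\infty(\Delta_n)}$, and since $Q$ is even and nondecreasing on $[0,\infty)$ we have $W\ge e^{-Q(a_n)}$ on $\Delta_n$, so $\min_\pi\|\pi W\|_{L^\infty(\Delta_n)}\ge e^{-Q(a_n)}\min_\pi\|\pi\|_{L^\infty(\Delta_n)}=e^{-Q(a_n)}2(a_n/2)^n$. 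The prefactor $C_1^{-1}(a_n/n)^{1/2}(T(a_n))^{-1/4}$ has $n$th root tending to $1$, because $a_n^{1/n}\to1$ and $T(a_n)=O(n^2)$ (Lemma 3.7 of \cite{LL1}). Hence
\[
\Big(\frac{1}{\gamma_n}\Big)^{1/n}\ge(1+o(1))\,e^{-Q(a_n)/n}\,\frac{a_n}{2},
\]
and $\limsup_n\gamma_n^{1/n}a_n\le2$ follows as soon as $Q(a_n)/n\to0$.

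The crux, and the only place where the hypothesis $T(x)\to\infty$ enters, is therefore $\lim_{n\to\infty}Q(a_n)/n=0$. I would derive it from the defining relation \eqref{mrsnumber} by showing that, when $T\to\infty$, the integrand $h(t)=a_ntQ'(a_nt)$ concentrates near $t=1$. Using the growth estimate of \cite[(3.5), p.\,64]{LL1} exactly as in the proof of Lemma \ref{lem3.2}, one gets $h(t)/h(1)\lesssim t^{\,cT(a_n)}$ for $t$ near $1$, so $h$ is, up to exponentially small error, supported in a window $1-t=\BigO{1/T(a_n)}$. Comparing the flat integral $Q(a_n)=\int_0^1 h(t)/t\,dt$ with the singular one $n=\frac{2}{\pi}\int_0^1 h(t)/\sqrt{1-t^2}\,dt$ over this shrinking window, together with $a_nQ'(a_n)=T(a_n)Q(a_n)$, yields $a_nQ'(a_n)/n=\BigO{T(a_n)^{1/2}}$ and hence $Q(a_n)/n=\BigO{T(a_n)^{-1/2}}\to0$. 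I expect this quantitative estimate to be the main obstacle: the two norm-comparison steps are soft, whereas this one must capture the precise endpoint concentration, and it is exactly what separates the limit $2$ in the regime $T\to\infty$ from the strictly larger constant obtained for finite $\alpha$ (for Hermite, say, $Q(a_n)/n\to1$ and $\gamma_n^{1/n}a_n\to2\sqrt e$).
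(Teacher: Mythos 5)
Your route is genuinely different from the paper's, and its skeleton is sound. The paper's proof is short but leans on a deep citation: Theorem 1.22 of \cite{LL1} (strong asymptotics for the leading coefficients), which gives
\[
\gamma_n^{1/n}a_n = 2\,a_n^{-1/(2n)}\exp\left(\frac{1}{n\pi}\int_{-a_n}^{a_n}\frac{Q(s)}{\sqrt{a_n^2-s^2}}\,ds\right)(1+o(1)),
\]
after which the exponent is bounded by $Q(a_n)/n \le C/\sqrt{T(a_n)}\to 0$, the latter being a direct citation of Lemma 3.4 of \cite[p.~69]{LL1}. You replace the strong-asymptotics input by the $L^2$ extremal characterization of $1/\gamma_n$, Chebyshev numbers of $\Delta_n$, the Mhaskar--Saff identity, and the two Nikolskii inequalities already quoted in the proof of Lemma \ref{lem3.5}; all of these steps check out (the prefactors have $n$th root tending to $1$ by $a_n^{1/n}\to1$ and $T(a_n)=O(n^2)$), and they correctly isolate $Q(a_n)/n\to 0$ as the only place where $T\to\infty$ enters. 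What the paper's route buys is brevity and precision (the same formula immediately yields Lemma \ref{lem3.7} for finite $\alpha$); what yours buys is elementarity, at the cost of being intrinsically lossy: your sandwich gives only $2-o(1)\le \gamma_n^{1/n}a_n\le 2e^{Q(a_n)/n}(1+o(1))$, and since $Q(a_n)/n\to 1/(\alpha B_\alpha)\neq 1/\alpha$ for finite $\alpha$, your method could not reprove Lemma \ref{lem3.7} --- it works exactly in the regime $Q(a_n)/n\to0$, which is what is needed here.

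The one soft spot is your self-contained derivation of $Q(a_n)/n=\BigO{T(a_n)^{-1/2}}$. That bound is true, but it is precisely Lemma 3.4 of \cite[p.~69]{LL1}, which the paper simply cites; the cleanest repair of your argument is to do the same. As sketched, your concentration claim has a gap: the decay estimate (3.5) of \cite{LL1} carries the exponent $\max\{\Lambda, C_6T(a_nt)\}$ with $T$ evaluated at the \emph{inner} point, and quasi-increasingness bounds $T(a_nt)$ above by $C_1T(a_n)$ but not below, so ``supported up to exponentially small error in a window $1-t=\BigO{1/T(a_n)}$'' does not follow (over a single window of that width the decay is only by a bounded factor). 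If you want to avoid the citation, there is a clean fix that bypasses the flat integral entirely: by definition $Q(a_n)=a_nQ'(a_n)/T(a_n)$; monotonicity of $tQ'(t)$ and $\arccos(1-1/T)\ge\sqrt{2/T}$ give
\[
n\ \ge\ \frac{2}{\pi}\int_{1-1/T(a_n)}^{1}\frac{a_ntQ'(a_nt)}{\sqrt{1-t^2}}\,dt\ \ge\ c\,\frac{a_nQ'\bigl(a_n(1-1/T(a_n))\bigr)}{\sqrt{T(a_n)}},
\]
while condition (e) of $\mathcal{F}(C^2)$ plus quasi-increasingness yield $\log\bigl[Q'(a_n)/Q'(a_n(1-1/T(a_n)))\bigr]\le C_2\int_{a_n(1-1/T)}^{a_n}T(u)\,du/u\le C$, i.e.\ a bounded ratio across the window; combining, $a_nQ'(a_n)\le C' n\sqrt{T(a_n)}$ and hence $Q(a_n)/n\le C'/\sqrt{T(a_n)}\to0$, which is exactly the estimate your last display asserts.
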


\begin{proof}
Theorem 1.22 of \cite[p. 25]{LL1} gives
\[
\gamma_n=\frac{1}{\sqrt{2\pi}}\left(\frac{a_n}{2}\right)^{-n-\frac{1}{2}}e^{\frac{1}{\pi}\int_{-a_n}^{a_n}\frac{Q(s)}{\sqrt{a_n^2-s^2}} \, ds}(1+o(1)) \text{ as }n \to \infty,
\]
so that
\begin{equation}\label{root}
\gamma_n^{1/n} a_n = 2 a_n^{-\frac{1}{2n}} e^{\frac{1}{n\pi}\int_{-a_n}^{a_n}\frac{Q(s)}{\sqrt{a_n^2-s^2}} \, ds}(1+o(1)) \text{ as }n \to \infty.
\end{equation}
Since $Q$ is increasing on $(0,\infty)$, we have that
\begin{align} \label{3.6.1}
0 \le \lim_{n\to\infty} \frac{1}{n\pi}\int_{-a_n}^{a_n}\frac{Q(s)}{\sqrt{a_n^2-s^2}} \, ds \le \frac{Q(a_n)}{n\pi} \int_{-a_n}^{a_n} \frac{ds}{\sqrt{a_n^2-s^2}} = \frac{Q(a_n)}{n} \le \frac{C}{\sqrt{T(a_n)}} \to 0
\end{align}
as $n\to\infty$, by  Lemma 3.4 of \cite[p. 69]{LL1}. Thus
\[
\lim_{n\to\infty} \frac{1}{n\pi}\int_{-a_n}^{a_n}\frac{Q(s)}{\sqrt{a_n^2-s^2}} \, ds = 0,
\]
and \eqref{root} together with Lemma \ref{lem3.4} imply the result.
\end{proof}

​
\begin{lemma}\label{lem3.7}
Let $W=e^{-Q}\in \mathcal{F} (C^2)$, where $Q$ is even. If the function $T$ in the definition of $\mathcal{F} (C^2)$ satisfies
\begin{equation*}
\lim_{x\rightarrow \infty}T(x)=\alpha \in (1, \infty),
\end{equation*}
then
\[
\lim_{n\to\infty}  \gamma_n^{1/n} a_n = 2e^{1/\alpha},
\]
where $\gamma_n$ is the leading coefficient of the orthonormal polynomial $p_n(x)$ associated with the weight $W^2.$
\end{lemma}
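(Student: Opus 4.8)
The plan is to follow exactly the route used in the proof of Lemma \ref{lem3.6}, since the asymptotic formula from Theorem 1.22 of \cite{LL1} is insensitive to the value of $\alpha$. That formula yields \eqref{root}, namely
\[
\gamma_n^{1/n} a_n = 2\, a_n^{-\frac{1}{2n}}\, e^{\frac{1}{n\pi}\int_{-a_n}^{a_n}\frac{Q(s)}{\sqrt{a_n^2-s^2}}\, ds}(1+o(1)),
\]
and Lemma \ref{lem3.4} still gives $a_n^{-1/(2n)} = (a_n^{1/n})^{-1/2}\to 1$. So everything reduces to the exponent. In Lemma \ref{lem3.6} the exponent tended to $0$, producing the factor $e^0=1$ and the limit $2$; here I expect it to converge to $1/\alpha$, producing the factor $e^{1/\alpha}$ and hence the value $2e^{1/\alpha}$. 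Thus the whole content of the lemma is the identity
\[
\lim_{n\to\infty}\frac{1}{n\pi}\int_{-a_n}^{a_n}\frac{Q(s)}{\sqrt{a_n^2-s^2}}\, ds = \frac{1}{\alpha}.
\]

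To prove this, I would substitute $s=a_n t$ and use that $Q$ is even to rewrite the integral as $\frac{2}{n\pi}\int_0^1 \frac{Q(a_n t)}{\sqrt{1-t^2}}\,dt$. The key is the pointwise limit of the integrand. Writing $Q(u)=uQ'(u)/T(u)$ from the definition of $T$, I have $\frac{Q(a_n t)}{n}=\frac{1}{T(a_n t)}\cdot\frac{a_n t\,Q'(a_n t)}{n}$. For fixed $t\in(0,1)$ I factor $\frac{a_n t\,Q'(a_n t)}{n}=t\cdot\frac{a_n Q'(a_n)}{n}\cdot\frac{Q'(a_n t)}{Q'(a_n)}$ and apply \eqref{3.2.0} (which gives $a_n Q'(a_n)/n\to B_\alpha^{-1}$) together with \eqref{equiv} (which gives $Q'(a_n t)/Q'(a_n)\to t^{\alpha-1}$); since $a_n t\to\infty$ forces $T(a_n t)\to\alpha$, I obtain $\frac{Q(a_n t)}{n}\to \frac{t^\alpha}{\alpha B_\alpha}$.

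To pass the limit through the integral I would invoke dominated convergence: because $Q$ is increasing on $(0,\infty)$, $Q(a_n t)\le Q(a_n)$, and $\frac{Q(a_n)}{n}=\frac{a_n Q'(a_n)}{n\,T(a_n)}\to\frac{1}{\alpha B_\alpha}$ is bounded, so the integrand is dominated by a constant multiple of $1/\sqrt{1-t^2}$, which is integrable on $[0,1]$. Hence
\[
\lim_{n\to\infty}\frac{2}{n\pi}\int_0^1\frac{Q(a_n t)}{\sqrt{1-t^2}}\,dt = \frac{2}{\pi}\int_0^1\frac{t^\alpha}{\alpha B_\alpha\sqrt{1-t^2}}\,dt = \frac{1}{\alpha}\cdot\frac{2}{\pi B_\alpha}\int_0^1\frac{t^\alpha}{\sqrt{1-t^2}}\,dt = \frac{1}{\alpha},
\]
the final equality being the definition of $B_\alpha$. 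Substituting this exponent back into \eqref{root} gives $\gamma_n^{1/n}a_n\to 2\cdot 1\cdot e^{1/\alpha}=2e^{1/\alpha}$, as required.

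The only delicate point is the interchange of limit and integral, where I must supply a dominating function that remains integrable near $t=1$, the location of the singularity of $1/\sqrt{1-t^2}$. This is precisely where the monotonicity of $Q$ and the boundedness of $Q(a_n)/n$ are used, the latter being a consequence of $T(a_n)\to\alpha<\infty$, in contrast to the $\alpha=\infty$ case of Lemma \ref{lem3.6}. I expect no further obstacles, since all the asymptotic inputs \eqref{3.2.0}, \eqref{equiv}, and Lemma \ref{lem3.4} are already in hand.
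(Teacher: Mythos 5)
Your proposal is correct, and it follows the paper up to the key reduction: via \eqref{root} and Lemma \ref{lem3.4}, the lemma comes down to showing that $\frac{1}{n\pi}\int_{-a_n}^{a_n}Q(s)/\sqrt{a_n^2-s^2}\,ds \to 1/\alpha$, and both you and the paper then substitute $s=a_nt$ and use $Q(u)=uQ'(u)/T(u)$. From that point on, however, your evaluation of the limit is genuinely different. The paper splits the range at $|t|=a_n^{-1/2}$, factors out $1/T(a_nt)=(1+o(1))/\alpha$ uniformly on the outer region, shows the inner region contributes $o(1)$ using \eqref{3.2.0}, and then reads off the value of the remaining integral directly from the Mhaskar--Rakhmanov--Saff normalization \eqref{mrsnumber}; no dominated convergence and no evaluation of a limiting integral is needed, and the regular-variation property \eqref{equiv} enters only indirectly (through the proof of \eqref{3.2.0}). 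You instead compute the pointwise limit $Q(a_nt)/n \to t^\alpha/(\alpha B_\alpha)$, which requires invoking \eqref{equiv} from Remark \ref{rem3.3} as well as \eqref{3.2.0}, dominate the integrand by $C/\sqrt{1-t^2}$ using the monotonicity of $Q$ together with the boundedness of $Q(a_n)/n = a_nQ'(a_n)/(nT(a_n))$, and finish by computing $\frac{2}{\pi}\int_0^1 t^\alpha/(\alpha B_\alpha\sqrt{1-t^2})\,dt = 1/\alpha$ from the definition of $B_\alpha$. All of your inputs are legitimately available under the hypotheses of this lemma, and your treatment of the singularity at $t=1$ is sound, so the argument goes through. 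What the paper's route buys is economy: the exact normalization \eqref{mrsnumber} replaces both the domination argument and the integral evaluation. What your route buys is transparency: it isolates the statement that the contracted external field converges to the Freud field, $Q(a_nt)/n \to \gamma_\alpha t^\alpha$ (note $\gamma_\alpha = 1/(\alpha B_\alpha)$), which is precisely the asymptotic the paper itself derives, by the same argument as yours, later in the proof of Theorem \ref{thm2.4}.
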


\begin{proof}
Considering Lemma \ref{lem3.4} and (\ref{root}), we only need to show
\[
\lim_{n\to\infty} \frac{1}{n\pi}\int_{-a_n}^{a_n}\frac{Q(s)}{\sqrt{a_n^2-s^2}} \, ds=
\lim_{n\to\infty} \frac{1}{n}\int_{-1}^{1}\frac{Q(a_nt)}{\pi\sqrt{1-t^2}} \, dt=1/\alpha.
\]
In terms of the function $T$, we can recast this as
\begin{equation*}
\lim_{n\to\infty} \frac{1}{n}\int_{-1}^{1} \frac{1}{T(a_nt)} \frac{a_ntQ'(a_nt)}{\pi\sqrt{1-t^2}} \, dt=1/\alpha.
\end{equation*}
Using our assumption that $\lim_{t\rightarrow \infty}T(t)=\alpha \in (1, \infty)$, we have uniformly for $\abs{t}\geq a_n^{-1/2}$, that $T(a_nt)=\alpha(1+o(1))$, so as the integrand is non-negative,
\begin{equation}\label{3.7.1}
\frac{1}{n}\int_{a_n^{-1/2}\leq \abs{t}\leq 1}\frac{1}{T(a_nt)} \frac{a_ntQ'(a_nt)}{\pi\sqrt{1-t^2}} \, dt=\frac{1+o(1)}{\alpha}\frac{1}{n}\int_{a_n^{-1/2}\leq \abs{t}\leq 1}\frac{a_ntQ'(a_nt)}{\pi\sqrt{1-t^2}} \, dt.
\end{equation}
The integral over the remaining range is small: for $j=0, 1$, using \eqref{3.2.0} and $\lim_{n\to \infty} a_n=\infty$,
\begin{align*}
0 &\leq \frac{1}{n}\int_{\abs{t}\leq a_n^{-1/2}}\frac{1}{T(a_nt)^j} \frac{a_ntQ'(a_nt)}{\pi\sqrt{1-t^2}} \, dt\\
&\leq \frac{1}{n}\frac{a_n^{1/2}Q'(a_n^{1/2})}{\Lambda^j \pi \sqrt{1-a_n^{-1}}}2a_n^{-1/2}\leq C\frac{Q'(a_n^{1/2})}{n}\leq C\frac{Q'(a_n)}{n}=o(1).
\end{align*}
Thus \eqref{3.7.1} and \eqref{mrsnumber} yield
\[
 \frac{1}{n}\int_{-1}^{1} \frac{1}{T(a_nt)} \frac{a_ntQ'(a_nt)}{\pi\sqrt{1-t^2}} \, dt=\frac{1+o(1)}{\alpha}
 \frac{1}{n} \int_{-1}^{1} \frac{a_ntQ'(a_nt)}{\pi\sqrt{1-t^2}} \, dt=\frac{1+o(1)}{\alpha}.
\]
\end{proof}

\begin{proof}[Proof of Theorem \ref{thm2.4}]
We first deal with the case
\[
\lim_{x\rightarrow \infty}T(x)=\infty,
\]
and show that the normalized zero counting measures $\tau_n$ for the scaled polynomials $P_n^*(s)$ converge weakly to the arcsine distribution $\mu_\infty$ with probability one. Theorem 2.1 of \cite[p. 310]{BSS} states that if $\{M_n\}_{n=1}^\infty$  is any sequence of monic polynomials of degree deg$(M_n)=n$ satisfying
\begin{equation}\label{2.4.1}
\limsup_{n\to \infty} \left\|M_n \right\|_{L^\infty([-1, 1])}^{1/n}\leq \frac{1}{2},
\end{equation}
then the normalized zero counting measures $\tau_n$ for the polynomials $M_n$ converge weakly to $\mu_\infty$. Note that $1/2$ in the above equation is the logarithmic capacity of $[-1,1]$, see Corollary 5.2.4 of \cite[p. 134]{Ra}.
We show that the monic polynomials
\[
M_n(x) := P_n^*(x)/(c_n\gamma_n a_n^n),\quad n\in\N,
\]
satisfy (\ref{2.4.1}) with probability one, so that the result of Theorem \ref{thm2.4} follows for $\alpha=\infty.$ We know from Lemma \ref{lem3.5} that
\[
\limsup_{n\to\infty} \left\|P_n W \right\|_{L^\infty(\R)}^{1/n} \leq 1 \text{ with probability one.}
\]
Using the contracted weight
\[
w_n(s):=\sqrt[n]{W(a_ns)}=e^{-\frac{Q(a_ns)}{n}},\quad s \in \R,
\]
and the properties of $a_n$ \cite[p. 4]{LL1}, we obtain that
\[
\left\|P_n^* w_n^n \right\|_{L^\infty([-1, 1])}=\left\|P_nW \right\|_{L^\infty([-a_n, a_n])}=\left\|P_nW \right\|_{L^\infty(\R)}.
\]
It follows that
\[
\limsup_{n\to\infty} \left\|P_n^* w_n^n \right\|_{L^\infty([-1, 1])}^{1/n}\leq 1 \text{ with probability one.}
\]
Since $\lim_{n\to \infty}Q(a_n)/n=0$ (recall (\ref{3.6.1})), we have that
\begin{align*}
\limsup_{n\to\infty} \left\|P_n^*\right\|_{L^\infty([-1, 1])}^{1/n} &\leq \limsup_{n\to\infty} \left\|P_n^*w_n^n\right\|_{L^\infty([-1, 1])}^{1/n} e^{Q(a_n)/n} \leq 1
\end{align*}
with probability one. We use below that $\lim_{n\to\infty} \gamma_n^{1/n} a_n=2$ by Lemma \ref{lem3.6}, and that  $\lim_{n\to\infty} |c_n|^{1/n} = 1$ with probability one by Lemma 4.2 of \cite{Pr}. This implies that
\begin{align*}
\limsup_{n\to\infty} \left\|M_n\right\|_{L^\infty([-1,1])}^{1/n} &= \limsup_{n\to\infty} \left\|\frac{P_n^*}{c_n\gamma_na_n^n}\right\|_{L^\infty([-1,1])}^{1/n} \\ &= \limsup_{n\to\infty} \left\|P_n^*\right\|_{L^\infty([-1,1])}^{1/n} \frac{1}{|c_n|^{1/n}} \frac{1}{\gamma_n^{1/n}a_n} \le \frac{1}{2}
\end{align*}
with probability one.

Next, we prove the case
\[
\lim_{x\rightarrow \infty}T(x)=\alpha\in (1, \infty).
\]
Recall that the standard Freud weight with index $\alpha$ is given by
\[
w(s)=e^{-\gamma_\alpha\abs{s}^\alpha}, \quad s\in \R,
\]
where
\[
\gamma_\alpha = \frac{\Gamma(\frac{\alpha}{2})\Gamma(\frac{1}{2})}{2\Gamma(\frac{\alpha}{2}+\frac{1}{2})} = \int_0^1 \frac{t^{\alpha-1}}{\sqrt{1-t^2}}\, dt
\]
see \cite[p. 239]{ST}. Since $\gamma_{\alpha+1}=B_\alpha \pi/2,$ we apply $\Gamma(1/2)=\sqrt{\pi}$ and $\Gamma(t+1)=t\Gamma(t)$ to obtain that
\begin{align*}
\gamma_\alpha B_\alpha& = \gamma_\alpha\frac{2\gamma_{\alpha+1}}{\pi} = \frac{2}{\pi}\frac{\Gamma(\frac{\alpha}{2})\Gamma(\frac{1}{2})}{2\Gamma(\frac{\alpha}{2}+\frac{1}{2})}
\frac{\Gamma(\frac{\alpha+1}{2})\Gamma(\frac{1}{2})}{2\Gamma(\frac{\alpha+1}{2}+\frac{1}{2})}
=\frac{1}{\alpha}.
\end{align*}
Note that by \cite[p. 240]{ST}, $F_w=\log 2+1/\alpha$ is the modified Robin constant and $\mu_w=\mu_\alpha$ is the equilibrium measure corresponding to $w$. Following \cite{ST}, we call a sequence of monic polynomials $\{M_n\}_{n=1}^\infty$, with $ \deg(M_n)=n,$ asymptotically extremal with respect to the weight $w$ if it satisfies
\begin{equation}\label{2.4.2}
\lim_{n\to\infty} \|w^n M_n\|_{L^\infty(\R)}^{1/n} = e^{-F_w} = e^{-1/\alpha}/2.
\end{equation}
Theorem 4.2 of \cite[p. 170]{ST} states that  asymptotically extremal monic polynomials have their zeros distributed according to the measure $\mu_w$. Namely, the normalized zero counting measures of $M_n$ converge weakly to $\mu_w=\mu_\alpha$. On the other hand, by Corollary 2.6 of \cite[p. 157]{ST} and Theorem 5.1 of \cite[p. 240]{ST},
\[
\|w^n M_n\|_{L^\infty(\R)}=\|w^n M_n\|_{L^\infty([-1,1])}.
\]
Together with Theorem 3.6 of \cite[p. 46]{ST}, (\ref{2.4.2}) is equivalent to
\[
\limsup_{n\to\infty} \|w^n M_n\|_{L^\infty([-1, 1])}^{1/n} \leq e^{-F_w} = e^{-1/\alpha}/2.
\]
We show that the monic polynomials
\[
M_n(x) := P_n^*(x)/(c_n\gamma_n a_n^n),\quad n\in\N,
\]
are asymptotically extremal in this sense with probability one, so that the result of Theorem \ref{thm2.4} follows. Note that
\[
\lim_{n\to\infty} \left\|P_n W \right\|_{L^\infty(\R)}^{1/n} = 1 \text{ with probability one}
\]
by Lemma \ref{lem3.5},
and that
\[
\left\|P_n^* w_n^n \right\|_{L^\infty([-1, 1])} = \left\|P_nW \right\|_{L^\infty([-a_n, a_n])} = \left\|P_nW \right\|_{L^\infty(\R)}
\]
by \cite[p. 4]{LL1}.
Hence
\[
\limsup_{n\to\infty} \left\|P_n^* w_n^n \right\|_{L^\infty([-1, 1])}^{1/n}\leq 1 \text{ with probability one.}
\]
By Lemma \ref{lem3.7}, and since $\lim_{n\to\infty} |c_n|^{1/n} = 1$ with probability one by Lemma 4.2 of \cite{Pr}, it follows that
\begin{align*}
\limsup_{n\to\infty} \left\|M_n w^n \right\|_{L^\infty([-1, 1])}^{1/n} &= \limsup_{n\to\infty} \left\|P_n^*w^n \right\|_{L^\infty([-1, 1])}^{1/n}\frac{1}{c_n^{1/n} \gamma_n^{1/n}a_n} \\ &= \frac{1}{2e^{1/\alpha}}\limsup_{n\to\infty} \left\|P_n^* w^n \right\|_{L^\infty([-1, 1])}^{1/n} \\ &=e^{-F_w}\limsup_{n\to\infty} \left\|P_n^* w^n \right\|_{L^\infty([-1, 1])}^{1/n}.
\end{align*}
On the other hand,
\begin{align*}
\limsup_{n\to\infty} \left\|P_n^* w^n \right\|_{L^\infty([-1, 1])}^{1/n} &\le \limsup_{n\to\infty} \left\|P_n^* w_n^n \right\|_{L^\infty([-1, 1])}^{1/n} \left\|w/w_n\right\|_{L^\infty([-1, 1])} \\ &\le \limsup_{n\to\infty} \left\|w/w_n\right\|_{L^\infty([-1,1])}.
\end{align*}
Since $w_n$ and $w$ are both even, it remains to show that
\[
\limsup_{n\to\infty} \left\|w/w_n\right\|_{L^\infty([0,1])} \le 1.
\]
Let $\eps\in(0,1)$. For $x\in[\eps,1],$ \eqref{3.2.0} and then \eqref{equiv} give that
\begin{align*}
\frac{Q(a_ns)}{n} &= \frac{1+o(1)}{B_\alpha}\int_0^s \frac{a_nQ'(a_nx)}{a_nQ'(a_n)}\,dx = \frac{1+o(1)}{B_\alpha} \int_0^s x^{\alpha-1}(1+o(1))\,dx \\ &= \frac{s^\alpha}{\alpha B_\alpha} (1+o(1)) = \gamma_\alpha s^\alpha (1+o(1)) \quad \mbox{as } n\to\infty.
\end{align*}
This holds uniformly for $s\in[\eps,1]$ as \eqref{equiv} does. Hence
\[
\left\|w/w_n\right\|_{L^\infty([\eps,1])} =  \sup_{s\in[\eps,1]} \exp\left(\frac{Q(a_ns)}{n} - \gamma_\alpha s^\alpha\right) \to 1\quad \mbox{as } n\to\infty.
\]
Since $Q$ is increasing, we also have that
\[
\left\|w/w_n\right\|_{L^\infty([0,\eps])} \le  \exp\left(\frac{Q(a_n\eps)}{n}\right) \to \exp(\gamma_\alpha \eps^\alpha).
\]
We finish the proof by letting $\eps\to 0.$
\end{proof}

\begin{proof}[Proof of Corollary \ref{cor2.5}]
Consider the normalized zero counting measure $\tau_n=\frac{1}{n}\sum_{k=1}^n \delta_{z_k}$ for the scaled  polynomial $P_n^*(s)$ of \eqref{2.5}, where $\{z_k\}_{k=1}^n$ are the zeros of that polynomial, and $\delta_z$ denotes the unit point mass at $z$. Theorem \ref{thm2.4} implies that the measures $\tau_n$ converge weakly to $\mu_\alpha$ with probability one. Since $\mu_\alpha(\partial E)=0,$ we obtain that $\tau_n\vert_E$ converges weakly to $\mu_\alpha\vert_E$ with probability one by Theorem $0.5^\prime$ of \cite{La} and Theorem 2.1 of \cite{Bi}. In particular, we have that the random variables $\tau_n(E)$ converge to $\mu_\alpha(E)$ with probability one. Hence this convergence holds in $L^p$ sense by the Dominated Convergence Theorem, as $\tau_n(E)$ are uniformly bounded by 1, see Chapter 5 of \cite{Gut}. It follows that
\[
\lim_{n\to\infty} \E[|\tau_n(E) - \mu_\alpha(E)|] = 0
\]
for any compact set $E$ such that $\mu_\alpha(\partial E)=0,$ and
\[
\left|\E[\tau_n(E) - \mu_\alpha(E)]\right| \le \E[|\tau_n(E) - \mu_\alpha(E)|] \to 0 \quad\text{as } n\to\infty.
\]
But $\E[\tau_n(E)]=\E[N_n^*(E)]/n$ and $\E[\mu_\alpha(E)]=\mu_\alpha(E),$ which immediately gives \eqref{2.7}.
\end{proof}

\begin{proof}[Proof of Theorem \ref{thm2.2}]
Theorem \ref{thm2.3} gives that
\[
\lim_{n\rightarrow\infty} \frac{1}{n}\E\left[N_n^*\left([a,b]\right)\right] = \frac{1}{\sqrt{3}} \mu_\alpha([a,b])
\]
for any interval $[a,b]\subset (-1,1)$. Note that both $\E\left[N_n^*\left(H\right)\right]$ and $\mu_\alpha(H)$ are additive functions of the set $H$. Moreover, they both vanish when $H$ is a single point by \eqref{2.7} and the absolute continuity of $\mu_\alpha$ with respect to Lebesgue measure on $[-1,1]$.
Hence \eqref{2.7} gives that
\[
\lim_{n\rightarrow\infty} \frac{1}{n}\E\left[N_n^*\left(\R\setminus (-1,1)\right)\right] = \mu_\alpha(\R\setminus(-1,1)) = 0.
\]
It now follows that
\[
\lim_{n\to\infty} \frac{1}{n} \E[N_n^*(\R)] = \frac{1}{\sqrt{3}} \mu_\alpha((-1,1)) = \frac{1}{\sqrt{3}}.
\]
To complete the proof, observe that $N_n^*(\R)=N_n(\R)$, so that $\E[N_n^*(\R)]=\E[N_n(\R)]$, since $L_n(x)=x/a_n$ is a bijection for each fixed $n$.
Therefore (\ref{2.4}) is proved.
\end{proof}

\section*{Acknowledgements}

Research of the first author was partially supported by NSF grant DMS136208. Research of the second author was partially supported by the National Security Agency (grant H98230-15-1-0229) and by the American Institute of Mathematics. Work of the third author is done towards completion of her Ph.D. degree at Oklahoma State University under the direction of the second author.

\bigskip
\textsc{Doron S. Lubinsky}\\ School of Mathematics, Georgia Institute of Technology, Atlanta, GA 30332, USA \\ e-mail\textup{: \texttt{lubinsky@math.gatech.edu}}

\medskip
\textsc{Igor E. Pritsker}\\ Department of Mathematics, Oklahoma State University, Stillwater, OK 74078, USA \\ e-mail\textup{: \texttt{igor@math.okstate.edu}}

\medskip
\textsc{Xiaoju Xie}\\ Department of Mathematics, Oklahoma State University, Stillwater, OK 74078, USA \\ e-mail\textup{: \texttt{sophia.xie@okstate.edu}}

\end{document}